\newtheorem{thm}{Theorem}[section]
\newtheorem{theorem}[thm]{Theorem}
\newtheorem{corollary}[thm]{Corollary}
\newtheorem{problem}[thm]{Problem}
\newtheorem{notation}[thm]{Notation}
\newtheorem{example}[thm]{Example}
\DeclareMathOperator{\lspan}{span}
\newtheorem{lemma}[thm]{Lemma}
\newtheorem{proposition}[thm]{Proposition}
\newtheorem{defn}[thm]{Definition}
\newtheorem{definition}[thm]{Definition}
\theoremstyle{remark}
\newtheorem{remark}[thm]{Remark}
\newcommand{\RR}{\mathbb R}
\newcommand{\NN}{\mathbb N}
\newcommand{\CC}{\mathbb C}
\newcommand{\HH}{\mathbb H}
\begin{document}

\title{Phase Retrieval in $\ell_2(\RR)$}
\author[Andrade, Casazza, Cheng, Haas, Tran
 ]{Sara Botelho-Andrade, Peter G. Casazza, Desai Cheng, John Haas,
 and Tin T. Tran}
\address{Department of Mathematics, University
of Missouri, Columbia, MO 65211-4100}

\thanks{The authors were supported by
 NSF DMS 1609760; NSF DMS 1725455; NSF ATD 1321779; and ARO  W911NF-16-1-0008 }

\email{sandrade102087@gmail.com, Casazzap@missouri.edu} \email{chengdesai@yahoo.com, terraformthedreamscape@gmail.com}
\email{tinmizzou@gmail.com}

\begin{abstract}
We will review the major results in finite dimensional real
phase retrieval for vectors and projections.  We then 
\begin{enumerate}
\item prove that many of these theorems hold in infinite dimensions,
\item give counter-examples
to show that many others fail in infinite dimensions, 
\item  list finite dimensional
results are unknown for $\ell_2$.
\end{enumerate}
\end{abstract}

\maketitle

\section{introduction}

Phase retrieval is one of the most applied and studied areas of
research today.  Phase retrieval for Hilbert space frames was
introduced in \cite{BCE} and quickly became an industry.  Although
much work has been done on the complex infinite dimensional case
of phase retrieval, only one paper exists on infinite dimenional
real phase retrieval \cite{CCD}.  Here we will review the major
results on finite dimensional real phase retrieval and show:
\begin{enumerate}
\item  Which results hold in infinite dimensions;
\item  Which results fail in infinite dimensions;
\item  Which results are unknown in infinite dimensions.
\end{enumerate}

We will need the definition of a Hilbert space frame.

\begin{definition}
A family of vectors $\{x_i\}_{i\in I}$ in a finite or infinite
dimensional Hilbert space $\HH$ is a {\bf frame} if there are
constants $0<A\le B < \infty$ so that
\[ A\|x\|^2 \le \sum_{i\in I}|\langle x,x_i\rangle|^2 
\le B\|x\|^2,\mbox{ for all }x \in \HH.\]
\end{definition}

\begin{enumerate}
\item If $A=B$ this is an A-{\bf tight} frame.
\item If $A=B=1$, this is a Parseval frame.
\end{enumerate}

We also need to work with Riesz sequences.

\begin{definition}
A family $X=\{x_i\}_{i\in I}$ in a finite or infinite dimensional
Hilbert space $\HH$ is a {\bf Riesz sequence} if there are
constants $0<A\le B<\infty$ satisfying for all sequences
of scalars $\{a_i\}_{i\in I}$ we have:
\[ A \sum_{i\in I}|a_i|^2 \le \|\sum_{i\in I}a_ix_i\|
\le B \sum_{i\in I}|a_i|^2.\]
If the closed linear span of $X$ equals $\HH$, we call $X$
a {\bf Riesz basis}.
\end{definition}

The complement property and full spark will be a major tool here.

\begin{definition}
A family of vectors $\{x_i\}_{i=1}^{\infty}$ 
in $\ell_2$ has the {\bf complement
property} if for every $I\subset \NN$ either $\overline{\lspan}_{i\in I}= \ell_2$ or $\overline{\lspan}_{i\in I^c}= \ell_2$.
\end{definition}

\begin{definition}
A family of vectors $\{x_i\}_{i=1}^m$ in $\RR^n$ is {\bf full
spark} if for every $I\subset [m]$ with $|I|=n$ we have
that $\{x_i\}_{i\in I}$ is linearly independent (hence spans
$\RR^n$).
\end{definition}

Throughout the paper,  $\{e_i\}_{i=1}^{\infty}$ will be used to denote the canonical orthonormal basis for the real Hilbert space $\ell_2$.

\section{Finite Dimensional Results Which Carry Over to Infinite
Dimensions}

In this section we look at finite dimensional real phase retrieval
and norm retrieval results which carry over to infinite dimensions.

\subsection{Phase Retrieval}

We start with the definitions.

\begin{definition}
A family of vectors $\{x_i\}_{i\in I}$ 
in a Hilbert space 
$\HH$ does {\bf phase retrieval} if whenever $x,y\in \HH$ satisfy
\[ |\langle x,x_i\rangle|=|\langle y,x_i\rangle|, \mbox{ for all }
i=1,2,\ldots,\]
then $x=\pm y$.

A family of projections $\{P_i\}_{i\in I}$ on a Hilbert space
$\HH$ does {\bf phase retrieval} if whenever $x,y\in \HH$
and
\[ \|P_ix\|=\|P_iy\|,\mbox{ for all }i=1,2,\ldots,\]
then $x=\pm y$.
\end{definition}

The following result appeared in \cite{CCD}.  The 
corresponding finite dimensional result first appeared in
\cite{CCJ}.

\begin{theorem}
A family of vectors in real $\ell_2$ does phase retrieval if and
only if it has the complement property.
\end{theorem}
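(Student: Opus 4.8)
The plan is to prove both directions by unpacking what it means for $x = \pm y$ to fail. The key observation, exactly as in the finite-dimensional case, is to pass from the pair $(x,y)$ to the pair of vectors $u = x+y$ and $v = x-y$. Indeed, the condition $|\langle x, x_i\rangle| = |\langle y, x_i\rangle|$ is equivalent to $\langle x, x_i\rangle^2 = \langle y, x_i\rangle^2$, which factors as
\[
\langle x+y, x_i\rangle \, \langle x-y, x_i\rangle = \langle u, x_i\rangle\,\langle v, x_i\rangle = 0.
\]
So the family $\{x_i\}$ fails phase retrieval precisely when there exist $u, v \in \ell_2$, not both zero and indeed with $u \neq \pm\, (\text{scalar})$... more precisely with $x \neq \pm y$, i.e. both $u \neq 0$ and $v \neq 0$, such that for every index $i$ either $\langle u, x_i\rangle = 0$ or $\langle v, x_i\rangle = 0$. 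Setting $I = \{i : \langle u, x_i\rangle = 0\}$, this says $u \perp \overline{\lspan}_{i\in I}$ and $v \perp \overline{\lspan}_{i \in I^c}$, so neither of those closed spans is all of $\ell_2$: the family fails the complement property.

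For the converse direction I would simply reverse this: if the complement property fails, there is a set $I \subset \NN$ with $\overline{\lspan}_{i \in I} \neq \ell_2$ and $\overline{\lspan}_{i \in I^c} \neq \ell_2$. Pick nonzero $u \perp \overline{\lspan}_{i\in I}$ and nonzero $v \perp \overline{\lspan}_{i \in I^c}$, and set $x = (u+v)/2$, $y = (u-v)/2$. Then $x \neq \pm y$ since $u, v \neq 0$, while for each $i \in I$ we have $\langle u, x_i\rangle = 0$ and for each $i \in I^c$ we have $\langle v, x_i\rangle = 0$, so $\langle x, x_i\rangle^2 = \langle y, x_i\rangle^2$ for all $i$; hence $\{x_i\}$ does not do phase retrieval.

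The one genuine subtlety — the place where the infinite-dimensional statement needs more care than a finite-dimensional one — is the use of orthogonal complements: I am asserting that a proper closed subspace $M \subsetneq \ell_2$ has a nonzero vector in $M^\perp$, and that "$\langle u, x_i\rangle = 0$ for all $i \in I$" is equivalent to "$u \perp \overline{\lspan}_{i\in I}$." Both are clean facts about Hilbert spaces (the projection theorem, and continuity of the inner product), so there is no real obstacle here; one must only be mindful to take \emph{closed} spans throughout, which is why the complement property was stated with $\overline{\lspan}$. I expect the main thing to be careful about in the write-up is bookkeeping the equivalence "$x = \pm y$ $\iff$ $u = 0$ or $v = 0$" and making sure the negation is handled correctly, but no estimate or frame-bound argument is needed — the frame bounds play no role in this particular theorem beyond guaranteeing $\{x_i\}$ is a well-defined family in $\ell_2$.
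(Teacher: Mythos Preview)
Your argument is correct and is precisely the classical complement-property proof via the change of variables $u=x+y$, $v=x-y$. The paper does not supply its own proof of this theorem; it merely records the result and cites \cite{CCD}, where the same argument you have outlined appears. There is nothing to correct or compare beyond noting that your care with closed spans and the Hilbert-space projection theorem is exactly what distinguishes the $\ell_2$ statement from the finite-dimensional one.
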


It follows that results on phase retrieval
in the finite dimensional case which just depend on the complement property will
also hold in $\ell_2$. 

\begin{theorem}\label{pr1}
Let $X=\{x_i\}_{i=1}^{\infty}$ do phase retrieval.
\begin{enumerate}
\item Then
 so does
$\{a_ix_i\}_{i=1}^{\infty}$ for $a_i\not= 0$ for all i.
\item If $T$ is an invertible operator then $\{Tx_i\}_{i=1}^{\infty}$ does phase retrieval.
\end{enumerate}
\end{theorem}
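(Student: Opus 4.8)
The strategy in both parts is to reduce to the characterization of Theorem: a family does phase retrieval iff it has the complement property. So instead of arguing directly about equal moduli of coefficients, I would show that the complement property is preserved under each operation. This avoids any delicate convergence issues that would arise from manipulating the defining inequalities $|\langle x,x_i\rangle| = |\langle y,x_i\rangle|$ directly.

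For part (1), suppose $\{x_i\}_{i=1}^\infty$ has the complement property, and fix $a_i \neq 0$ for all $i$. Let $I \subset \NN$ be arbitrary. By hypothesis, either $\overline{\lspan}_{i\in I}\{x_i\} = \ell_2$ or $\overline{\lspan}_{i\in I^c}\{x_i\} = \ell_2$. The key observation is that since each $a_i$ is a nonzero scalar, $\lspan_{i\in I}\{a_i x_i\} = \lspan_{i\in I}\{x_i\}$ as subspaces (the same is true for $I^c$), and hence the closures agree as well. Therefore the same dichotomy holds for $\{a_i x_i\}$, so it has the complement property and thus does phase retrieval by Theorem. This part is essentially immediate.

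For part (2), let $T$ be invertible (bounded with bounded inverse), and let $I \subset \NN$. Again pick the side, say $\overline{\lspan}_{i\in I}\{x_i\} = \ell_2$. I want to conclude $\overline{\lspan}_{i\in I}\{Tx_i\} = \ell_2$. Clearly $\lspan_{i\in I}\{Tx_i\} = T\big(\lspan_{i\in I}\{x_i\}\big)$, so it suffices to know that $T$ maps a dense subspace to a dense subspace. This follows because $T$ is a homeomorphism of $\ell_2$ onto $\ell_2$: if $V$ is dense then $T(V)$ is dense, since for any $y \in \ell_2$ we may write $y = Tx$ and approximate $x$ by elements of $V$, then apply the continuous map $T$. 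Hence $\{Tx_i\}$ has the complement property and does phase retrieval.

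The only point requiring a moment's care — and the closest thing to an obstacle — is the density argument in part (2): one must use that $T$ is \emph{invertible} (not merely injective with dense range), so that $T$ is a bicontinuous bijection and genuinely carries dense sets to dense sets and closed-span conditions to closed-span conditions. With that observed, both parts follow cleanly from Theorem, with no infinite-dimensional subtlety beyond continuity of $T$ and $T^{-1}$.
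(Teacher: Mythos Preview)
Your proof is correct and matches the paper's approach: the paper does not give an explicit proof of this theorem, but states it immediately after observing that finite-dimensional results depending only on the complement property carry over to $\ell_2$, which is precisely the reduction you carry out. Your care in part (2) about $T$ being a homeomorphism (so that closed spans are preserved) is exactly the right point to make in the infinite-dimensional setting.
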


The finite dimensional version of the next theorem first appeared
in \cite{E}.

\begin{theorem}\label{Dan}
A family of projections $\{P_i\}_{i=1}^{\infty}$ 
on $\ell_2$ does phase
retrieval in $\ell_2$ if and only if for every $0\not= x \in \ell_2$, $\overline{\lspan}\{P_ix\}_{i=1}^{\infty}=\ell_2.$ 
\end{theorem}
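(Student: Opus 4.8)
The plan is to reduce the phase retrieval condition to a statement about orthogonal complements via a polarization-type identity. The key observation is that for any orthogonal projection $P$ on a real Hilbert space, $P=P^*=P^2$ gives $\|Px\|^2=\langle Px,x\rangle$, and hence, for all $x,y\in\ell_2$,
\[
\|P_ix\|^2-\|P_iy\|^2=\langle P_i(x+y),\,x-y\rangle .
\]
First I would record this identity; it shows that $\|P_ix\|=\|P_iy\|$ for all $i$ is equivalent to $\langle P_i(x+y),x-y\rangle=0$ for all $i$.

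Next I would change variables. Setting $u=x+y$ and $v=x-y$ defines a linear bijection of $\ell_2\times\ell_2$ onto itself, under which the conclusion $x=\pm y$ corresponds exactly to ``$u=0$ or $v=0$.'' Therefore $\{P_i\}_{i=1}^\infty$ does phase retrieval if and only if: for all $u,v\in\ell_2$, if $\langle P_iu,v\rangle=0$ for every $i$, then $u=0$ or $v=0$.

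Finally I would interpret the hypothesis $\langle P_iu,v\rangle=0$ for all $i$ as the statement $v\in\big(\overline{\lspan}\{P_iu\}_{i=1}^\infty\big)^\perp$, and argue both directions. For the ``if'' direction: assuming $\overline{\lspan}\{P_iu\}_{i=1}^\infty=\ell_2$ for every $u\neq0$, any $v$ orthogonal to this span must lie in $\ell_2^\perp=\{0\}$, so $u\neq0$ forces $v=0$, giving phase retrieval. For the ``only if'' direction I would argue the contrapositive: if $\overline{\lspan}\{P_iu_0\}_{i=1}^\infty$ is a proper closed subspace for some $u_0\neq0$, choose $v_0\neq0$ in its orthogonal complement; then $\langle P_iu_0,v_0\rangle=0$ for all $i$ while neither $u_0$ nor $v_0$ is zero, so phase retrieval fails.

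I do not expect a serious obstacle here: the entire content is the polarization identity together with the fact that in a Hilbert space a closed subspace is the whole space precisely when its orthogonal complement is $\{0\}$, which is valid in infinite dimensions exactly as in finite dimensions. The only points requiring a little care are that we are working over $\RR$ (so the real polarization identity is the one used) and that one must consistently take the \emph{closed} linear span so that the orthogonal-complement dichotomy applies.
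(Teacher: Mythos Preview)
Your proof is correct and follows essentially the same approach as the paper: both reduce the problem via the substitution $u=x+y$, $v=x-y$ to the orthogonality condition $\langle P_iu,v\rangle=0$, then use that a closed subspace equals $\ell_2$ iff its orthogonal complement is $\{0\}$. Your version is in fact more self-contained, since the paper defers the $(\Leftarrow)$ direction to a reference while you prove both directions explicitly via the polarization identity.
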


\begin{proof}
$(\Rightarrow)$  We proceed by way of contradiction.  
So assume that there is an $0\not= x \in \ell_2$
and $\{P_ix\}_{i=1}^{\infty}$ does not span $\ell_2$.  Choose
$0\not= y \in \ell_2$ so that $y\perp P_ix$ for all $i=1,2,\ldots$.
Let $u=x+y$ and $v=x-y$.  Then since $P_iy \perp P_ix$ for all
$i$, we have that
\[ \|P_i(x+y)\|^2=\|P_ix\|^2+\|P_iy\|^2= \|P_i(x-y)\|^2.\]
If $\{P_i\}_{i=1}^\infty$ does phase retrieval, then 
\[ x+y=\pm (x-y).\]
This implies $x=0$ or $y=0$, which is a contradiction.

$(\Leftarrow)$  The proof of Edidin's theorem in \cite{CC} works
directly here.  
\end{proof}

So results in finite dimensions which depend only on Edidin's 
theorem also hold in $\ell_2$.

\begin{corollary}
The following are equivalent for a 
family of projections $\{P_i\}_{i=1}^{\infty}$ on $\ell_2$:
\begin{enumerate}
	\item $\{P_i\}_{i=1}^{\infty}$ fails phase retrieval.
	\item There are vectors $\|x\|=\|y\|=1$ in $\ell_2$ so that
	$P_ix \perp P_iy$ for all $i$.
\end{enumerate} 
\end{corollary}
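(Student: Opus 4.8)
The plan is to exploit the substitution $u=x+y$, $v=x-y$, which sets up an exact correspondence between pairs of vectors that $\{P_i\}_{i=1}^\infty$ cannot distinguish and pairs whose images under every $P_i$ are orthogonal. This is the projection analogue of the familiar trick from the vector case, and it makes the equivalence a one-line computation in each direction.

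First I would do $(2)\Rightarrow(1)$. Given unit vectors $x,y$ with $P_ix\perp P_iy$ for all $i$, set $u=x+y$ and $v=x-y$. Then for every $i$,
\[ \|P_iu\|^2 = \|P_ix\|^2+\|P_iy\|^2 = \|P_iv\|^2, \]
so $\|P_iu\|=\|P_iv\|$ for all $i$. Since $\|x\|=\|y\|=1$, the identity $u=v$ would force $y=0$ and $u=-v$ would force $x=0$; hence $u\neq\pm v$, and $\{P_i\}_{i=1}^\infty$ fails phase retrieval.

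Next I would do $(1)\Rightarrow(2)$. If $\{P_i\}_{i=1}^\infty$ fails phase retrieval, choose $x,y$ with $\|P_ix\|=\|P_iy\|$ for all $i$ but $x\neq\pm y$, and again put $u=x+y$, $v=x-y$. Because the substitution is invertible, $x\neq\pm y$ is equivalent to $u$ and $v$ both being nonzero. Using that the inner product on real $\ell_2$ is symmetric, so the cross terms cancel,
\[ \langle P_iu, P_iv\rangle = \langle P_i(x+y), P_i(x-y)\rangle = \|P_ix\|^2 - \|P_iy\|^2 = 0 \]
for every $i$. Normalizing, $x':=u/\|u\|$ and $y':=v/\|v\|$ are unit vectors with $P_ix'\perp P_iy'$ for all $i$, which is $(2)$.

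There is no real obstacle here; the only two points deserving a moment's care are that the map $(x,y)\mapsto(x+y,x-y)$ is a bijection (so that $x\neq\pm y$ matches exactly with $u,v$ nonzero, which is what permits the normalization step), and that we are working over $\RR$, so that $\langle P_ix,P_iy\rangle$ and $\langle P_iy,P_ix\rangle$ cancel identically in the expansion of $\langle P_iu,P_iv\rangle$—in the complex setting one would only obtain the vanishing of the real part. One could also route the argument through Theorem~\ref{Dan}, but the direct computation above is shorter and entirely self-contained.
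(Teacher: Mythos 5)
Your argument is correct, and it is a genuinely more self-contained route than the one the paper intends. The paper gives no separate proof: the corollary is presented as an instance of Theorem~\ref{Dan} (Edidin's theorem), so its direction $(1)\Rightarrow(2)$ implicitly rests on the nontrivial $(\Leftarrow)$ half of that theorem, which the paper only cites from \cite{CC} (failure of phase retrieval yields an $x\neq 0$ whose orbit $\{P_ix\}$ is not dense; one then takes a unit $y$ orthogonal to its closed span and uses $\langle P_ix,P_iy\rangle=\langle P_ix,y\rangle=0$). Your $(2)\Rightarrow(1)$ is exactly the polarization computation the paper uses to prove the $(\Rightarrow)$ half of Theorem~\ref{Dan}, but your $(1)\Rightarrow(2)$ replaces the appeal to Edidin's theorem by running the same substitution $u=x+y$, $v=x-y$ in reverse: over $\RR$ the cross terms cancel, giving $\langle P_iu,P_iv\rangle=\|P_ix\|^2-\|P_iy\|^2=0$, and the bijectivity of $(x,y)\mapsto(u,v)$ ensures $u,v\neq 0$ so normalization is legitimate. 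What your approach buys is that the corollary becomes a purely formal polarization identity, independent of the deeper spanning criterion; what the paper's framing buys is the thematic point that such statements follow from Edidin's theorem alone and hence transfer automatically from finite dimensions to $\ell_2$. Both of your delicate points (the bijection, and the reliance on real scalars --- in the complex case one only gets $\mathrm{Re}\,\langle P_iu,P_iv\rangle=0$) are exactly the right ones to flag.
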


The finite dimensional version of the following theorem appeared in
\cite{CCJ}.  The proof in that case immediately works in
$\ell_2$.

\begin{theorem}
Let $\{P_i\}_{i=1}^\infty$ do phase retrieval on $\ell_2$.
\item Then $\{(I-P_i)\}_{i=1}^\infty$ does phase retrieval if and only
if it does norm retrieval.
\end{theorem}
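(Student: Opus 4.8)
The plan is to notice that one direction is essentially free, and the other is the Pythagorean identity for a projection and its orthogonal complement.

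First, recall (or observe) that phase retrieval always implies norm retrieval: if a family does phase retrieval and the magnitude data of $x$ and $y$ agree, then $x=\pm y$, so in particular $\|x\|=\|y\|$. This disposes of the ``only if'' direction with no computation, independent of the hypothesis on $\{P_i\}$.

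For the ``if'' direction, assume $\{(I-P_i)\}_{i=1}^\infty$ does norm retrieval and take $x,y\in\ell_2$ with $\|(I-P_i)x\|=\|(I-P_i)y\|$ for all $i$. By the norm retrieval hypothesis we first get $\|x\|=\|y\|$. Now use the orthogonal decomposition $x=P_ix+(I-P_i)x$, so that $\|x\|^2=\|P_ix\|^2+\|(I-P_i)x\|^2$, and likewise for $y$. Subtracting these two identities and using $\|x\|=\|y\|$ together with $\|(I-P_i)x\|=\|(I-P_i)y\|$ yields $\|P_ix\|=\|P_iy\|$ for every $i$. Since $\{P_i\}_{i=1}^\infty$ does phase retrieval on $\ell_2$, we conclude $x=\pm y$, so $\{(I-P_i)\}_{i=1}^\infty$ does phase retrieval.

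I do not expect a real obstacle here; the one point worth flagging is \emph{why} the hypothesis is on $\{P_i\}$ and not merely on $\{(I-P_i)\}$. The step that converts complement-projection magnitudes into projection magnitudes is legitimate only after we know $\|x\|=\|y\|$, which is precisely what norm retrieval of $\{(I-P_i)\}$ supplies; and once we have the $\|P_ix\|=\|P_iy\|$ data we must invoke phase retrieval of the original family $\{P_i\}$ to finish. So the proof is short, but it genuinely uses all three hypotheses in the statement.
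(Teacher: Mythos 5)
Your proof is correct and is exactly the standard argument the paper invokes (it defers to the finite-dimensional proof in [CCJ], which is precisely this: phase retrieval trivially gives norm retrieval, and conversely norm retrieval plus the Pythagorean identity $\|x\|^2=\|P_ix\|^2+\|(I-P_i)x\|^2$ converts the complement data into $\|P_ix\|=\|P_iy\|$, so phase retrieval of $\{P_i\}$ finishes). Nothing further is needed.
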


In the finite dimensional case it is known \cite{CCJ} that there
are projections $\{P_i\}_{i=1}^m$ which do phase retrieval
but $\{(I-P_i)\}_{i=1}^m$ fails phase retrieval.  
We now give the infinite
dimensional version of this

\begin{theorem}There is a set of vectors in $\ell_2$ which do phase retrieval but their perps fail to do phase retrieval.
\end{theorem}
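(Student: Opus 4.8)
The plan is to realise the counterexample as a countable dense subset of a single quadric cone in $\ell_2$, arranged so that one fixed pair of linearly independent vectors $u,v$ simultaneously witnesses the failure of phase retrieval for all the complementary projections. Everything rests on two elementary reformulations. First, writing $P_i$ for the rank-one orthogonal projection onto $x_i$ (with $x_i\ne0$), expanding the inner product gives
\[ \langle (I-P_i)u,\,(I-P_i)v\rangle=\langle u,v\rangle-\frac{\langle u,x_i\rangle\langle v,x_i\rangle}{\|x_i\|^2}, \]
so if nonzero, linearly independent $u,v$ satisfy $\langle u,x_i\rangle\langle v,x_i\rangle=\langle u,v\rangle\|x_i\|^2$ for all $i$, then $x:=u+v$ and $y:=u-v$ obey $\|(I-P_i)x\|=\|(I-P_i)y\|$ for all $i$ while $x\ne\pm y$; that is, $\{I-P_i\}$ fails phase retrieval. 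Second, a family $\{x_i\}$ has the complement property --- equivalently, does phase retrieval --- if and only if it is \emph{not} contained in a union $z^\perp\cup w^\perp$ of two closed hyperplanes: a partition violating the complement property is exactly a choice of a nonzero $z\perp x_i$ ($i\in I$) and a nonzero $w\perp x_i$ ($i\notin I$), and conversely, given $\{x_i\}\subseteq z^\perp\cup w^\perp$ one recovers such a partition by putting $i\in I$ whenever $\langle x_i,z\rangle=0$.

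Next I would take $u=e_1$, $v=e_1+e_2$ (so $\langle u,v\rangle=1$ and $u,v$ are independent) and set
\[ Q=\bigl\{x\in\ell_2:\langle u,x\rangle\langle v,x\rangle=\|x\|^2\bigr\}=\Bigl\{x\in\ell_2:x_1x_2=\textstyle\sum_{i\ge2}x_i^2\Bigr\}, \]
and let $\{x_i\}_{i=1}^\infty$ be any countable dense subset of $Q\setminus\{0\}$. Two of the three things to check are easy. One checks directly that $e_1\in Q$ and $2e_1+e_2\pm e_j\in Q$ for every $j\ge3$, and the span of just these vectors already contains every $e_i$, so $\overline{\lspan}\{x_i\}=\ell_2$. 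And since every $x_i$ lies in $Q$, the displayed identity gives $\langle(I-P_i)u,(I-P_i)v\rangle=0$ for all $i$, whence $\{I-P_i\}$ fails phase retrieval by the first reformulation. So the whole theorem reduces to showing that $\{x_i\}$ --- equivalently, by density, $Q$ itself --- is not contained in any union of two closed hyperplanes, which by the second reformulation is precisely the complement property and hence phase retrieval.

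To handle that, I would test $Q$ against the explicit, connected family $x(\gamma)=(1+\|\gamma\|^2)e_1+e_2+\gamma$, with $\gamma$ ranging over the finitely supported vectors in $\overline{\lspan}\{e_j:j\ge3\}$; each $x(\gamma)$ lies in $Q$. If $Q\subseteq z^\perp\cup w^\perp$ for some nonzero $z,w$, then $\langle z,x(\gamma)\rangle\langle w,x(\gamma)\rangle\equiv0$ in $\gamma$; restricting $\gamma$ to any finite block of coordinates turns this into the vanishing of a product of two polynomials over $\RR$, so one factor vanishes identically on that block, and reading off its $\gamma_j^2$-coefficients, its $\gamma_j$-coefficients and its constant term forces $z=0$ or $w=0$ --- a contradiction. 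Letting the block grow shows that whichever of $z,w$ this kills is $0$ outright.

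The step I expect to fight with is this last one, i.e.\ making sure $Q$ is genuinely irreducible and does not secretly split as a union of two hyperplanes. That is exactly what happens when the relevant coordinate space is only two-dimensional --- there $x_1x_2-x_2^2=x_2(x_1-x_2)$ --- which is also why the known finite-dimensional counterexamples require dimension at least three; the role of the family $x(\gamma)$ is to push the configuration out into infinitely many coordinates so that no such factorisation can survive. The polynomial-block computation above is the cleanest rigorous route I see; one could instead invoke an identity theorem for the real-analytic maps $\gamma\mapsto\langle z,x(\gamma)\rangle$, but the elementary version is safer to commit to paper.
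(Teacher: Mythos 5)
Your proposal is correct, but it proves the theorem by a genuinely different construction than the paper. The paper works blockwise: for each $n$ it builds a full spark family of $2n-1$ vectors in $\RR^n$ lying on an explicit curve $A_n$, rigged so that $(I-P_{nk})\varphi_n$ has vanishing first coordinate for $\varphi_n=(1,\tfrac12,\ldots,\tfrac1n)$; embedding all blocks into $\ell_2$, phase retrieval follows because each truncation already does phase retrieval in $\RR^n$, and the perps fail because every $(I-P_{nk})\varphi$ lies in $e_1^\perp$, so $\overline{\lspan}\{(I-P_{nk})\varphi\}\neq\ell_2$ and Theorem \ref{Dan} applies. You instead fix $u=e_1$, $v=e_1+e_2$ and take a countable dense subset of the single quadric $Q=\{x:\langle u,x\rangle\langle v,x\rangle=\langle u,v\rangle\|x\|^2\}\setminus\{0\}$: the quadric equation makes $\langle(I-P_i)u,(I-P_i)v\rangle=0$ for every $i$, so $u+v$, $u-v$ directly witness failure of phase retrieval for the perps (this is exactly the easy direction of Theorem \ref{Dan}, but you do not need the hard direction at all), while phase retrieval for the $x_i$ themselves is reduced, via the correct reformulation of the complement property as "not contained in $z^\perp\cup w^\perp$" plus density and closedness of $z^\perp\cup w^\perp$, to showing $Q$ is not a union of two hyperplanes; your parametrized family $x(\gamma)=(1+\|\gamma\|^2)e_1+e_2+\gamma$ and the coefficient-reading argument on finite blocks settle this, and the "let the block grow" step is sound since for each $N$ the dichotomy forces $z_1=\cdots=z_N=0$ or $w_1=\cdots=w_N=0$, which is incompatible with both $z\neq0$ and $w\neq0$. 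What each approach buys: the paper's construction is explicit and carries extra structure (finite full spark subfamilies doing phase retrieval in each $E_n$, which the paper reuses elsewhere), whereas yours is conceptually cleaner, needing only the complement-property characterization and an elementary polynomial irreducibility argument; two minor cosmetic points are that linear independence of $u,v$ is more than you need (only $u\neq0\neq v$ matters for $u+v\neq\pm(u-v)$), and the separate spanning check is redundant once the complement property is established.
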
 
\begin{proof}
	First, we will show that for each $n\in \mathbb{N}$, there exista a full spark set of $(2n-1)$ vectors $\{x_{nk}\}_{k=1}^{2n-1}$ in $\mathbb{R}^n$ such that the first coordinate of the vector $(I-P_{nk})(\varphi_n)$ equals zero, where
	$$\varphi_n=\left(1, \dfrac{1}{2}, \ldots, \dfrac{1}{n}\right)$$ and $P_{nk}$ is the orthogonal projection onto $x_{nk}$.
	
	Define the following subset of $\RR^n$:
	$$A_n:=\left\{\left(\sum_{i=1}^{n-2}t^{2i}+\left(n-n\sum_{i=2}^{n-1}\dfrac{t^{i-1}}{i}\right)^2, t, t^2, \ldots t^{n-2}, n-n\sum_{i=2}^{n-1}\dfrac{t^{i-1}}{i} \right) : t\in \RR\right\}.$$
	
	Let any $x\in A_n$, and denote by $P_x$ the projection onto $x$.  Then
	$$(I-P_{x})(\varphi_n)=\varphi_n-\langle \varphi_n,\dfrac{x}{\Vert x\Vert}\rangle \dfrac{x}{\Vert x\Vert}.$$
	
	Denote by $a_1$ and $b_1$ the first coordinate of $x$ and $(I-P_x)(\varphi_n)$ respectively.  Then we have
	$$a_1=\Vert x\Vert^2-a_1^2,$$ and hence
	$$	b_1=1-\dfrac{1}{\Vert x\Vert^2}(a_1+1)a_1=0.$$
	
	Now we will show that for any finite family of hyperplanes in $\mathbb{R}^n$, there exists a point in $A_n$ that does not lie in any of these hyperplanes, and therefore there exists a full spark of $(2n-1)$ vectors $\{x_{nk}\}_{k=1}^{2n-1} \subset A_n$.
	
	Indeed, let $\{W_i\}_{i=1}^k$ be any finite 
	set of hyperplanes in $\RR^n$. 
	Suppose, by way of contradiction, that $B_n\subset \cup_{i=1}^kW_i$. Then there exists $j\in\{1, \ldots, k\}$ such that $W_j$ contains infinitely many vectors in $A_n$.
	
	Let $u=(u_1, u_2, \ldots, u_n)\in W_j^{\perp}, u\not=0$. Then we have
	
	$$\langle u, x_t\rangle =0$$
	for infinitely many $x_t\in A_n$.
	
	Hence,
	$$u_1\left(\sum_{i=1}^{n-2}t^{2i}+\left(n-n\sum_{i=2}^{n-1}\dfrac{t^{i-1}}{i}\right)^2\right)+\sum_{i=1}^{n-2}u_{i+1}t^i+u_n\left(n-n\sum_{i=2}^{n-1}\dfrac{t^{i-1}}{i}\right)=0$$ for infinitely many $t\in \RR$.
	
	This implies $u_1=u_2=\cdots=u_n=0$, which is impossible.
	
	Thus, we have shown that for each $n$, there exists a full spark set of $(2n-1)$  vectors $\{x_{k,n}\}_{k=1}^{2n-1}$ in $\RR^n$ such that the first coordinate of the vector $(I-P_{nk})(\varphi_n)$ equals zero. Notice that   $\{x_{nk}\}_{k=1}^{2n-1}$ does phase retrieval in $\RR^n.$
	
	Now, for each $ n$, we consider $x_{nk}$ as a vector in $\ell_2$, where its $j$-coordinate is zero when $j>n$. Then the collection of all $x_{nk}$, $n\in \mathbb{N}, k= 1, 2,\ldots, 2n-1, $ do phase retrieval in $\ell_2$.
	Indeed, suppose that $\vert \langle x, x_{nk}\rangle\vert =\vert\langle y, x_{nk}\rangle \vert$ for all $n, k$ but $x\not=\pm y$. Then there is a $n_0$ such that $$(x(1), x(2), \ldots ,x(n_0))\not=\pm (y(1), y(2), \ldots, y(n_0)).$$ But then the corresponding $\{x_{n_0k}\}_{k=1}^{2n_0-1}$ does phase retrieval $\in \mathbb{R}^{n_0}$ which implies $(x(1), x(2), \ldots ,x(n_0))=\pm (y(1), y(2), \ldots, y(n_0))$, a contradiction.
	
	Finally, we show that $\{x_{nk}^\perp\}_{n=1, k=1}^{\infty,\ \ 2n-1}$ fails phase retrieval in $\ell_2$. We will use the same notation $P_{nk}$ for the projection onto $x_{nk}\in \ell_2$, and 
	let 
	$\varphi=\left\{\frac{1}{n}\right\}_{n=1}^{\infty}\in \ell_2.$ Then by our construction, the first coordinate of $(I-P_{nk})(\varphi)$ equals zero for all $k, n$. Therefore, $\overline{\lspan}\{(I-P_{x_{nk}})(\varphi)\}_{nk}\not=\ell_2$. Therefore $\{x_{nk}^\perp\}_{n=1,k=1}^{\infty, \ \ 2n-1}$ fails phase retrieval by Theorem \ref{Dan}.

\end{proof}

In finite dimensions it is known
\cite{BCE} that any family of vectors doing phase retrieval
must contain at least $(2n-1)$-vectors.
It follows that a full spark family of vectors 
$\{x_i\}_{i=1}^{2n-1}$ does phase retrieval (since it has
complement property) but if we delete
any vector it fails phase retrieval. Now we give a construction to show
that this result holds in infinite dimensions.  
The following example shows that there is a family of vectors in $\ell_2$ which does phase retrieval but we cannot drop any vector and maintain phase retrieval.
This also 
contains a new construction for frames doing phase retrieval.

First, we need the following lemma:
\begin{lemma}\label{Lem1}
	Let $\{e_i\}_{i=1}^\infty$ be the canonical orthonormal basis for $\ell_2$. For any fixed $i$, if $x$ is orthogonal to $e_i+e_j$ for infinitely many $j>i$, then $\langle x, e_i\rangle=\langle x, e_j\rangle=0$ for all such $j$.
\end{lemma}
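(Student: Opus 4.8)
The plan is to exploit the square-summability of the coordinate sequence of $x$; the $\ell_2$ hypothesis does all the work. First I would fix the notation: set $c := \langle x, e_i\rangle$ and let $J \subseteq \{j \in \NN : j > i\}$ be the infinite set of indices $j$ for which $x \perp (e_i + e_j)$. For each $j \in J$, bilinearity of the inner product gives
\[
0 = \langle x, e_i + e_j\rangle = \langle x, e_i\rangle + \langle x, e_j\rangle = c + \langle x, e_j\rangle,
\]
so $\langle x, e_j\rangle = -c$ for every $j \in J$.

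Next I would invoke that $x \in \ell_2$, so $\sum_{k=1}^{\infty} |\langle x, e_k\rangle|^2 = \|x\|^2 < \infty$. In particular the coordinate sequence $(\langle x, e_k\rangle)_{k=1}^{\infty}$ tends to $0$, hence for any $\varepsilon > 0$ only finitely many coordinates can exceed $\varepsilon$ in absolute value. If $c \neq 0$, taking $\varepsilon = |c|/2$ would contradict the fact that $\langle x, e_j\rangle = -c$ for all $j$ in the infinite set $J$. Therefore $c = 0$, which gives $\langle x, e_i\rangle = c = 0$ and $\langle x, e_j\rangle = -c = 0$ for every such $j$, as claimed.

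There is essentially no obstacle here: the single crucial ingredient is that $x$ is a genuine element of $\ell_2$ (so its coordinates are square-summable, hence null), and the statement would fail for merely bounded coordinate sequences. This lemma will then be used in the construction that follows to restrict which coordinates of a vector orthogonal to a given family can be nonzero.
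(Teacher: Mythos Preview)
Your proof is correct and follows essentially the same approach as the paper: both use that the orthogonality condition forces infinitely many coordinates of $x$ to have the same nonzero absolute value, contradicting square-summability. The only cosmetic difference is that you compute $\langle x,e_j\rangle=-c$ exactly and then invoke that the coordinate sequence is null, whereas the paper observes $|\langle x,e_j\rangle|=|\langle x,e_i\rangle|$ and bounds $\|x\|^2$ below by an infinite sum of equal positive terms.
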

\begin{proof}
	Let $K=\{j: j>i, \langle x, e_i+e_j\rangle=0\}$, then by assumption, the cardinality of $K$ is infinite. 
	
	It is clear that $|\langle x, e_i\rangle|=|\langle x, e_j\rangle|$ for all $j\in K$. Suppose by a contradition that $| \langle x, e_j\rangle|>0$ for all $j\in K$. Then we have
	\[\Vert x\Vert^2\geq \sum_{j\in K}|\langle x, e_j\rangle|^2=\infty,\] a contradiction.
\end{proof}
\begin{example} Let the family of vectors $X=\{e_i+e_j\}_{i<j}$. Then $X$ does phase retrieval in $\ell_2$ but we cannot drop any vector of $X$ and maintain phase retrieval. 
\end{example}

\begin{proof}
	Let $I$ be any subset of the set $\{(i, j): i<j\}$, and we can assume that $(1,j)\in I$ for infinitely many $j$. We will show that either $\{e_i+e_j\}_{(i, j)\in I}$ or $\{e_i+e_j\}_{(i, j)\in I^c}$ spans $\ell_2$.
	Suppose $\{e_i+e_j\}_{(i, j)\in I}$ does not span $\ell_2$. We will show that $\{e_i+e_j\}_{(i, j)\in I^c}$ spans $\ell_2$. 
	
	Let any $x=(x(1), x(2), \ldots)$ be such that $\langle x, e_i+e_j\rangle=0$ for all $(i, j)\in I^c$.
	
	By assumption, there is $y=(y(1), y(2), \ldots), y\not=0$ and $\langle y, e_i+e_j\rangle=0$ for all $(i, j)\in I$.		 	
	Let $s$ be the smallest number such that $y(s)\not=0$. By Lemma \ref{Lem1}, $(s, j)\notin I$ for infinitely many $j>s$. Hence there is $t>s$ such that $(s, j)\in I^c$ for all $j\geq t$. Again, by Lemma \ref{Lem1}, we get
	$$x(s)=x(j)=0 \mbox{ for all } j\geq t.$$
	
	We will now show that $x(j)=0$ for all $j=1,2\ldots t-1$.
	Suppose there is $1\leq j< s$ such that $x(j)\not=0$. This implies $(j, s) \notin I^c$. Thus $(j, s)\in I$ and hence $y(j)\not=0$. But this contradicts the way we chose $s$. So $x(j)=0$ for all $1\leq j<s$.
	
	Now let any $s<j<t$. 
	If $(s,j)\in I^c$, then $x(j)=0$. 
	If $(s,j)\in I$, then $y(j)\not=0$. Note that by assumption, $(1, j) \in I$  for infinitely many $j$, and hence by Lemma \ref{Lem1}, we get that $y(1)=0$. Thus, $(1, j)\notin I$. Therefore $(1, j)\in I^c$ and so $x(j)=x(1)=0$.
	This completes the proof that $\{e_i+e_j\}_{(i, j)\in I^c}$ span $\ell_2$.
	
	Now we will show that we cannot drop any vector of $X$ and maintain phase retrieval.
	
	Fix any $(k, \ell), k<\ell$. Consider $Y=\{e_i+e_j : i<j, (i, j)\not=(k, \ell)\}$.
	Let $$x=e_k+e_{\ell}, \ y=e_k-e_{\ell}.$$
	Clearly, $x\not= \pm y$. For any vector $e_i+e_j\in Y$, we compute:
	$$\langle x, e_i+e_j\rangle= \langle e_k, e_i\rangle+\langle e_k, e_j\rangle+\langle e_\ell, e_i\rangle+\langle e_\ell, e_j\rangle,$$
	$$\langle y, e_i+e_j\rangle= \langle e_k, e_i\rangle+\langle e_k, e_j\rangle-\langle e_\ell, e_i\rangle-\langle e_\ell, e_j\rangle.$$
	If $i=k$ then $j\not=\ell$, $i<\ell$ and $k<j$. Thus
	$$\langle x, e_i+e_j\rangle =\langle y, e_i+e_j\rangle=1.$$
	If $j=k$, then $i<j=k<\ell$. So
	$$\langle x, e_i+e_j\rangle =\langle y, e_i+e_j\rangle=1.$$
	
	Consider the case $i, j\not=k$. 
	If $i=\ell$ then $j\not=\ell$. Hence 
	$$\langle x, e_i+e_j\rangle=1, \mbox { and } \langle y, e_i+e_j\rangle=-1.$$
	If $i\not=\ell$ and $j=\ell$ then 
	$$\langle x, e_i+e_j\rangle=1, \mbox { and } \langle y, e_i+e_j\rangle=-1.$$
	Finally, if $i\not=\ell$ and $j\not=\ell$ then 
	$$\langle x, e_i+e_j\rangle=\langle y, e_i+e_j\rangle=0.$$
	Thus, in all cases, we always have that
	$$|\langle x, e_i+e_j\rangle|=|\langle y, e_i+e_j\rangle|, \mbox { for all } e_i+e_j\in Y.$$ Since $x\not=\pm y$, $Y$ cannot do phase retrieval.
	
\end{proof}

\subsection{Full Spark}

\begin{remark}
It is known that the full spark families of vectors in $\RR^n$
are dense in $\RR^n$ in the sense that given any family of
vectors $x=\{x_i\}_{i=1}^m$ in $\RR^n$ and any $\epsilon >0$, there
is a full spark family of vectors $Y=\{y_i\}_{i=1}^m$ in $\RR^n$
so that
\[ d(x,y)^2 =\sum_{i=1}^m\|x_i-y_i\|^2 < \epsilon.\]
\end{remark}

 One interpretation of the definition of full spark is that any
minimal number of vectors in the set which could possibly span, must span,  
i.e. any subset of $n$-vectors must span.  The corresponding statement
for $\ell_2$ is:

\begin{definition}
A family of vectors $\{x_i\}_{i=1}^{\infty}$ is {\bf full spark} in $\ell_2$ if every infinite 
subset spans $\ell_2$.
\end{definition}

A full spark set clearly has complement property and hence does phase retrieval in the infinite dimensional
case. 

\begin{theorem}
There exist full spark families of vectors in $\ell_2$ which
then do phase retrieval.
\end{theorem}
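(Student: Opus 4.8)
The plan is to build an infinite-dimensional analogue of the classical Vandermonde construction of full spark frames. Fix a sequence $\{t_i\}_{i=1}^{\infty}$ of distinct real numbers lying in a compact subinterval of $(-1,1)$, say $t_i \in [0,\tfrac12]$ (for instance $t_i = \tfrac12(1-\tfrac1i)$), and set
\[ x_i = \sum_{k=1}^{\infty} t_i^{\,k-1}\, e_k . \]
Each $x_i$ is a genuine element of $\ell_2$, since $\|x_i\|^2 = \sum_{k\ge 0} t_i^{2k} = (1-t_i^2)^{-1} < \infty$. The claim is that $\{x_i\}_{i=1}^{\infty}$ is full spark in the sense of the definition above, and hence does phase retrieval by the complement-property characterization recalled at the beginning of this section.

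To verify full spark, let $J\subseteq \NN$ be infinite; I must show $\overline{\lspan}\{x_i : i\in J\} = \ell_2$, equivalently that the only $y=(y(1),y(2),\dots)\in\ell_2$ orthogonal to every $x_i$, $i\in J$, is $y=0$. The orthogonality condition reads $\sum_{k\ge 1} y(k)\,t_i^{\,k-1}=0$ for all $i\in J$. Consider the power series $g(z)=\sum_{k\ge 1} y(k)\,z^{k-1}$. Since $(y(k))_k\in\ell_2$ is in particular bounded, $\limsup_k |y(k)|^{1/k}\le 1$, so $g$ has radius of convergence at least $1$ and therefore defines a holomorphic function on the open unit disk $\{|z|<1\}\subset\CC$ (equivalently, a real-analytic function on $(-1,1)$). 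Now $g$ vanishes at every $t_i$ with $i\in J$, and $\{t_i : i\in J\}$ is an infinite subset of the compact set $[0,\tfrac12]$, so by Bolzano–Weierstrass it has an accumulation point $t^*\in[0,\tfrac12]$, which lies strictly inside the domain of analyticity of $g$. By the identity theorem (a nonzero analytic function has isolated zeros), $g\equiv 0$, so $y(k)=0$ for all $k$, i.e. $y=0$. This shows every infinite subset of $\{x_i\}$ spans $\ell_2$. Finally, for any $I\subseteq\NN$ at least one of $I,I^c$ is infinite, so the corresponding subfamily spans; thus $\{x_i\}$ has the complement property and does phase retrieval.

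The only point that genuinely needs care is the one highlighted above: the accumulation point of the zeros of $g$ must lie strictly inside the disk of convergence, which is exactly why the parameters $t_i$ must be kept bounded away from $\pm 1$ (any compact subinterval of $(-1,1)$ works). With that constraint the rest is routine, and I expect no further obstacles.
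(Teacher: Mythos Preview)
Your proof is correct. Both you and the paper rely on the identity theorem for analytic functions as the key mechanism, so the approaches are close in spirit; the realizations, however, differ. The paper works in $L_2[0,1]$ with the exponential system $f_n(t)=e^{a_n t}$, $a_n\to a$: given $h\perp f_n$ for all $n$, the function $z\mapsto \int_0^1 h(t)e^{zt}\,dt$ is holomorphic and vanishes on a set with an interior accumulation point, forcing $h=0$; the full-spark family in $\ell_2$ is then obtained via an isometry $L_2[0,1]\cong\ell_2$. Your argument stays in $\ell_2$ from the start, using the infinite Vandermonde vectors $x_i=(1,t_i,t_i^2,\ldots)$ so that the orthogonality relation is literally a power series evaluated at the $t_i$, and the identity theorem applies with no auxiliary transform or change of space. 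This is a bit more elementary and self-contained, and is exactly the natural infinite-dimensional extension of the classical Vandermonde construction of full-spark frames in $\RR^n$; the paper's exponential version has the minor advantage that the resulting analytic function is entire, so no care about keeping the parameters in a compact subset of the domain is needed.
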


\begin{proof}
Such an example  can be found in Theorem 2 of \cite{V}.

There is another simpler way to do this using the following argument. Instead of $\ell_2$  consider $L_2[0, 1]$. It is known that if a sequence $a_n \neq a$ of  numbers (real or complex) tends to $a$ when $n \to \infty$, then the sequence of functions $f_n(t) = e^{a_n t}$ spans $L_2[0, 1]$ (this is a standard application of the Hahn-Banach theorem together with the uniqueness theorem for holomorphic functions, see more in Appendix III of \cite{L}.) Since every subsequence of $a_n$ also has the same limit, every subsequence of $f_n$ also spans $L_2[0, 1]$.
\end{proof}

\subsection{Norm Retrieval}

\begin{definition}
A family of projections $\{P_i\}_{i\in I}$ 
($I$ is finite or infinite) does {\bf norm 
retrieval} in $\HH$ if for any $x,y\in \HH$ we have:
\[ \|P_ix\|=\|P_iy\|\mbox{ for all }i\in I
\mbox{ then }\|x\|=\|y\|.\]
\end{definition}

The finite dimensional version of the next theorem first
appeared in \cite{CC}.

\begin{theorem}
A family of projections $\{P_i\}_{i=1}^{\infty}$ on $\ell_2$
does norm retrieval if and only if for any $x\in \ell_2$, $x\in \overline{\lspan}\{P_ix\}_{i=1}^\infty$.
\end{theorem}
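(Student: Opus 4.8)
\emph{Proof proposal.} The plan is to adapt the finite-dimensional argument of \cite{CC}, the only change being to replace linear spans with closed linear spans and to use orthogonal decomposition along a closed subspace, both of which are available in $\ell_2$. The computational heart of the argument is the polarization-type identity: for any orthogonal projection $P$ on a real Hilbert space and any $x,y$,
\[ \langle P(x+y),\,x-y\rangle = \|Px\|^2-\|Py\|^2 . \]
This holds because $P=P^{*}=P^{2}$ gives $\langle Px,x\rangle=\|Px\|^2$ and $\langle Py,y\rangle=\|Py\|^2$, while the cross terms $\langle Px,y\rangle$ and $\langle Py,x\rangle$ are equal (the scalars are real) and cancel. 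Consequently, $\|P_ix\|=\|P_iy\|$ for every $i$ \emph{if and only if} $x-y\perp P_i(x+y)$ for every $i$, i.e. $x-y$ is orthogonal to $M_{x+y}:=\overline{\lspan}\{P_i(x+y)\}_{i=1}^{\infty}$.

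For the implication $(\Leftarrow)$ I would assume $u\in\overline{\lspan}\{P_iu\}_{i=1}^{\infty}$ for every $u\in\ell_2$, and take $x,y$ with $\|P_ix\|=\|P_iy\|$ for all $i$. Setting $u=x+y$, the identity above shows $x-y\perp M_u$; since $M_u$ is closed and $u\in M_u$ by hypothesis, we get $\langle x+y,x-y\rangle=0$, that is $\|x\|=\|y\|$. Hence $\{P_i\}$ does norm retrieval. For $(\Rightarrow)$ I would argue by contraposition: suppose some $u$ satisfies $u\notin M_u:=\overline{\lspan}\{P_iu\}_{i=1}^{\infty}$. Since $M_u$ is a closed subspace of $\ell_2$, write $u=u_0+v$ with $u_0\in M_u$ and $0\neq v\in M_u^{\perp}$; then $\langle P_iu,v\rangle=0$ for all $i$ while $\langle u,v\rangle=\|v\|^2\neq0$. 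Put $x=\tfrac12(u+v)$ and $y=\tfrac12(u-v)$, so $x+y=u$ and $x-y=v$. By the polarization identity $\|P_ix\|^2-\|P_iy\|^2=\langle P_iu,v\rangle=0$ for every $i$, yet $\|x\|^2-\|y\|^2=\langle u,v\rangle\neq0$, so $\{P_i\}$ fails norm retrieval.

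I do not expect a genuine obstacle here: the finite-dimensional proof transfers essentially verbatim once one works with $\overline{\lspan}$ and uses that the orthogonal complement of a (closed) subspace is complemented in a Hilbert space. The only point requiring a small amount of care is recording the cancellation of the cross terms in the polarization identity, which is exactly where the real-scalar hypothesis is used; everything else is routine.
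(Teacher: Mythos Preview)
Your proof is correct and follows essentially the same approach as the paper: both directions hinge on the identity $\langle P_i(x+y),x-y\rangle=\|P_ix\|^2-\|P_iy\|^2$ (the paper writes this as $\langle P_iu,P_iv\rangle=0$ with $u=x+y$, $v=x-y$) together with the hypothesis to force $x+y\perp x-y$. The only cosmetic differences are that in $(\Leftarrow)$ the paper applies the span hypothesis to $x-y$ rather than $x+y$, and in $(\Rightarrow)$ the paper argues directly (showing $\bigl(\overline{\lspan}\{P_ix\}\bigr)^{\perp}\subset x^{\perp}$) whereas you argue by contraposition via an explicit orthogonal decomposition; neither change affects the substance.
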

\begin{proof}
		$(\Rightarrow )$ Let any $x\in \ell_2$ and  $y\perp \overline{\lspan}\{P_ix\}_{i=1}^\infty$. 
		Then  	$$\langle P_ix, P_iy\rangle=\langle P_ix, y\rangle=0, \text{ for all } i.$$
		Let $u=x+y, v=x-y$ then we have $\Vert P_iu\Vert =\Vert P_iv\Vert$ for all $i$. 
		Hence $\Vert u\Vert=\Vert v\Vert$. Note that
$x=\frac{1}{2}(u+v)$ and $y=\frac{1}{2}(u-v)$.  Now we compute:
\[ 
\langle x,y\rangle = \frac{1}{4}\langle u+v,u-v\rangle
= \frac{1}{4}(\|u\|^2 - \|v\|^2)=0.\]
Thus, $\left(\overline{\lspan}\{P_ix\}_{i=1}^{\infty}\right)^\perp\subset x^\perp,$
		or equivalently, $x\in \overline{\lspan}\{P_ix\}_{i=1}^{\infty}.$
		
		$(\Leftarrow)$ Suppose $x, y \in \ell_2$, 
		and $\Vert P_ix\Vert =\Vert P_iy\Vert$ for all $i=1,2,\ldots$. 
		Set $$u=x+y, v=x-y.$$ Then we have $\langle P_iu, P_iv\rangle=0$ for all $i$. Hence $u\perp \overline{\lspan}\{P_iv\}_{i=1}^{\infty}$.
		Since $v\in \overline{\lspan}\{P_iv\}_{i=1}^{\infty}$ then $u\perp v$.
		It follows that $\Vert x\Vert =\Vert y\Vert.$
\end{proof}

The finite dimensional version of the next result first appeared
in \cite{CG}.

\begin{theorem}
A family of vectors $\{x_i\}_{i=1}^{\infty}$ does norm retrieval
in $\ell_2$ if and only if for every $I\subset \NN$ if $x\perp
\overline{\lspan}\{x_i\}_{i\in I}$ and $y\perp \overline{\lspan}\{x_i\}_{i\in I^c}$
then $x \perp y$.
\end{theorem}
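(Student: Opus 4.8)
The plan is to prove both implications with the standard polarization device, passing between a pair of vectors and their sum and difference and exploiting the identity $\langle x+y,x-y\rangle = \|x\|^2 - \|y\|^2$. This is the exact analogue of the argument used above for the projection version, so no new machinery is needed; the only thing to watch is the bookkeeping of signs when we split the index set.

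For the ($\Leftarrow$) direction, assume the stated orthogonality condition, and suppose $x,y\in\ell_2$ satisfy $|\langle x,x_i\rangle| = |\langle y,x_i\rangle|$ for all $i$. Partition $\NN$ by putting $I = \{\, i : \langle x,x_i\rangle = \langle y,x_i\rangle \,\}$. Then for $i\in I$ we have $\langle x-y,x_i\rangle = 0$, while for $i\in I^c$ the equal-modulus hypothesis forces $\langle x,x_i\rangle = -\langle y,x_i\rangle$, hence $\langle x+y,x_i\rangle = 0$. Thus $x-y \perp \overline{\lspan}\{x_i\}_{i\in I}$ and $x+y \perp \overline{\lspan}\{x_i\}_{i\in I^c}$, so applying the hypothesis to the vectors $x-y$ and $x+y$ with this partition gives $\langle x-y,x+y\rangle = 0$, i.e. $\|x\|^2 = \|y\|^2$, which is norm retrieval.

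For the ($\Rightarrow$) direction, assume $\{x_i\}_{i=1}^\infty$ does norm retrieval, fix $I\subset\NN$, and let $x \perp \overline{\lspan}\{x_i\}_{i\in I}$ and $y \perp \overline{\lspan}\{x_i\}_{i\in I^c}$. Set $u = x+y$ and $v = x-y$. For $i\in I$ we have $\langle x,x_i\rangle = 0$, so $\langle u,x_i\rangle = \langle y,x_i\rangle = -\langle v,x_i\rangle$; for $i\in I^c$ we have $\langle y,x_i\rangle = 0$, so $\langle u,x_i\rangle = \langle x,x_i\rangle = \langle v,x_i\rangle$. In either case $|\langle u,x_i\rangle| = |\langle v,x_i\rangle|$, so norm retrieval yields $\|u\| = \|v\|$, whence $0 = \|u\|^2 - \|v\|^2 = 4\langle x,y\rangle$, i.e. $x \perp y$.

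The main (and essentially only) point requiring care is the choice of the index set $I$ in the first implication: one must be sure that $I^c$ is exactly the set on which $x$ and $y$ pair against $x_i$ with opposite sign, so that $x+y$ is killed on $I^c$ and $x-y$ on $I$. Everything else is polarization and is routine; in particular, no appeal to the complement property or to Edidin's theorem is needed, and the proof goes through verbatim whether the family is finite or infinite.
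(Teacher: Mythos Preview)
Your proof is correct and follows essentially the same polarization argument as the paper's own proof: both directions pass to $u=x+y$, $v=x-y$ and use $\|u\|^2-\|v\|^2=4\langle x,y\rangle$, with the index set split according to the sign pattern. The only cosmetic differences are that your $I$ and $I^c$ are interchanged relative to the paper's choice, and you (correctly) omit the paper's unnecessary normalization $\|x\|=\|y\|=1$ in the forward direction.
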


\begin{proof}
		$(\Rightarrow )$ We may assume that $\|x\|=\|y\|=1$.
		Let $u=x+y, v=x-y$. Then $$| \langle u, x_i\rangle|=|\langle v, x_i\rangle|, \text { for all } i.$$
		Since $\{x_i\}_{i=1}^\infty$ does norm retrieval, $\Vert u\Vert=\Vert v\Vert.$ It follows that $x\perp y$.
		
		$(\Leftarrow)$ Suppose $| \langle x, x_i\rangle|=|\langle y, x_i\rangle|
		, \text { for all } i.$ 
		Denote $$I=\{i: \langle x, x_i\rangle =-\langle y, x_i\rangle\}.$$
		Then $$I^c=\{i: \langle x, x_i\rangle =\langle y, x_i\rangle\}.$$
		Let $u=x+y, v=x-y$.  Then $u\perp\overline{\lspan}\{x_i\}_{i\in I}$ and $v\perp \overline{\lspan}\{x_i\}_{i\in I^c}$.
		Therefore, $u\perp v$, and we get $\Vert x\Vert =\Vert y\Vert$.

\end{proof}

\begin{corollary}
If $\{x_i\}_{i=1}^{\infty}$ is a Parseval frame in $\ell_2$, let
for $I\subset \NN$, let 
\[ H_1= \overline{\lspan}\{x_i\}_{i\in I}\mbox{ and } H_2= \overline{\lspan}\{x_i\}_{i\in I^c},\]
then $H_1 \perp H_2$.
\end{corollary}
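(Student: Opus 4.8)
The target is the orthogonality of the two closed spans $H_1=\overline{\lspan}\{x_i\}_{i\in I}$ and $H_2=\overline{\lspan}\{x_i\}_{i\in I^c}$ \emph{themselves}, not of their orthogonal complements. My plan is to route through the preceding norm-retrieval characterization and then reduce the span-orthogonality to a single structural fact about the frame. First I would observe that a Parseval frame always does norm retrieval: since $\|x\|^2=\sum_i|\langle x,x_i\rangle|^2$ for every $x$, the hypothesis $|\langle x,x_i\rangle|=|\langle y,x_i\rangle|$ for all $i$ forces $\|x\|^2=\|y\|^2$. Hence the preceding theorem applies to $\{x_i\}_{i=1}^{\infty}$ for the given split $I\sqcup I^c$.

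Second, I would record exactly what that theorem supplies, and emphasize that it falls short of the stated conclusion. Applied to $(I,I^c)$ it says: whenever $u\perp H_1$ and $v\perp H_2$ we have $u\perp v$; that is, $H_1^\perp\perp H_2^\perp$, or equivalently $H_1^\perp\subseteq H_2$ and $H_2^\perp\subseteq H_1$. The corollary instead asserts $H_1\perp H_2$, i.e.\ $H_1\subseteq H_2^\perp$; together with the inclusions just obtained this is equivalent to the two \emph{equalities} $H_2=H_1^\perp$ and $H_1=H_2^\perp$. So the genuine content is to promote the complement-orthogonality that comes for free into orthogonality of the spans.

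To attack that promotion I would pass to the synthesis operator $T\colon\ell_2(\NN)\to\ell_2$, $Tc=\sum_i c_ix_i$, with analysis adjoint $T^*x=(\langle x,x_i\rangle)_i$. For a Parseval frame $TT^*=I$, so $P:=T^*T$ is the orthogonal projection of $\ell_2(\NN)$ onto the range of the analysis map. Writing $Q_I$ for the coordinate projection onto sequences supported on $I$, we have $H_1=\overline{T(Q_I\ell_2(\NN))}$ and $H_2=\overline{T(Q_{I^c}\ell_2(\NN))}$, and for $c$ supported on $I$ and $d$ supported on $I^c$ the identity $\langle Tc,Td\rangle=\langle c,T^*Td\rangle=\langle Pc,d\rangle$ holds. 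By continuity of the inner product, orthogonality of the closures is the same as orthogonality of these dense subspaces, so $H_1\perp H_2$ is \emph{equivalent} to $P$ leaving $\ell_2(I)$ invariant, that is, to the commutation relation $[P,Q_I]=0$.

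This commutation is the step I expect to be the main obstacle, and it is the decisive one: it is precisely what separates the displayed span-orthogonality from the complement-orthogonality that the theorem yields automatically. The plan is to try to force $[P,Q_I]=0$ from the Parseval identity $TT^*=I$ alone. I would watch this step with care, since in the orthonormal-basis case $P=I$ and it holds trivially, whereas for a general Parseval frame $P$ need not respect the coordinate splitting; establishing — or sharply delimiting — this commutation is where the argument stands or falls.
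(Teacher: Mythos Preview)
Your analysis is sharper than the paper's own treatment. The paper gives no proof of the corollary; it is evidently meant to be read as an immediate consequence of the preceding norm-retrieval theorem together with the observation (which you make correctly in your first paragraph) that every Parseval frame does norm retrieval. But, exactly as you identify in your second paragraph, that theorem only delivers $H_1^\perp\perp H_2^\perp$, not $H_1\perp H_2$.

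You are right to isolate the commutation $[P,Q_I]=0$ as the decisive step, and your caution there is well placed: this commutation fails for general Parseval frames, and with it the corollary as literally stated is false. A concrete counterexample: take $x_1=e_1$, $x_2=x_3=\tfrac{1}{\sqrt{2}}\,e_2$, and $x_i=e_{i-1}$ for $i\ge 4$. This is a Parseval frame for $\ell_2$, yet with $I=\{2\}$ one has $H_1=\lspan\{e_2\}$ while $H_2=\ell_2$, so $H_1\not\perp H_2$. In your operator language, $P=T^*T$ has the off-diagonal entry $\langle Pe_2,e_3\rangle=\langle x_2,x_3\rangle=\tfrac{1}{2}$, so $P$ does not commute with $Q_{\{2\}}$.

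The printed conclusion is almost certainly a misstatement for $H_1^\perp\perp H_2^\perp$, which is what genuinely follows and which your first two paragraphs already prove cleanly. Your reduction of the stronger claim to $[P,Q_I]=0$ is correct and valuable precisely because it pinpoints why the literal statement cannot hold for an arbitrary Parseval frame; there is nothing further to extract from the Parseval identity $TT^*=I$ that would force it.
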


\section{Finite Dimensional Results Which Fail in Infinite
Dimensions}

It is known \cite{CCJ} that the families of vectors $\{x_i\}_{i=1}^m$ which do phase retrieval in $\RR^n$ are dense in 
the family of $m\ge(2n-1)$-element sets
of vectors in $\RR^n$.  This follows from the fact that full
spark families of $m\ge 2n-1$ vectors are dense and do phase
retrieval.  The corresponding result fails in infinite 
dimensions.

\begin{definition}
We say a family of sequences of vectors 
$\mathcal{F}$ is {\bf dense} in $\ell_2$
if given any sequence of vectors $Y=\{y_i\}_{i=1}^{\infty}\subset \ell_2$ and any $\epsilon >0$
there an $X=\{x_i\}_{i=1}^{\infty}\in \mathcal{F}$ so
that
\[ d(X,Y)^2 = \sum_{i=1}^{\infty}\|x_i-y_i\|^2 < \epsilon.\]
\end{definition}

\begin{remark}
Note that a Riesz basis cannot do phase retrieval since it clearly
fails complement property.
\end{remark}

\begin{proposition}\label{pp1}
Let $X=\{x_i\}_{i=1}^{\infty}\subset \ell_2$ be such that 
\[ \sum_{i=1}^{\infty}\|x_i-e_i\|^2 \le 1-\epsilon.\]
Then $X$ is a Riesz basis for $\ell_2$.
\end{proposition}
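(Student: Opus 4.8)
The plan is to recognize this as a Paley--Wiener type stability statement: $X$ is the image of the canonical orthonormal basis $\{e_i\}$ under a bounded operator that is a small perturbation of the identity, and such images are automatically Riesz bases. Concretely, I would define a linear map $T$ on the dense subspace of finitely supported sequences in $\ell_2$ by $T\!\left(\sum_i a_i e_i\right) = \sum_i a_i (x_i - e_i)$. The first step is the one estimate that does all the work: by the triangle inequality followed by Cauchy--Schwarz,
\[
\left\|\sum_i a_i (x_i - e_i)\right\| \le \sum_i |a_i|\,\|x_i - e_i\| \le \left(\sum_i |a_i|^2\right)^{1/2}\left(\sum_i \|x_i - e_i\|^2\right)^{1/2} \le \sqrt{1-\epsilon}\,\left(\sum_i |a_i|^2\right)^{1/2}.
\]
Since $\sqrt{1-\epsilon} < 1$ (the cases $\epsilon \ge 1$ being vacuous or reducing to $X = \{e_i\}$), this shows $T$ is bounded on a dense subspace with $\|T\| \le \sqrt{1-\epsilon}$, hence extends to a bounded operator on all of $\ell_2$ with the same bound.

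Next I would set $S = I + T$, so that $S e_i = e_i + (x_i - e_i) = x_i$ for every $i$. Because $\|T\| < 1$, the operator $S$ is invertible with $S^{-1} = \sum_{k=0}^{\infty} (-T)^k$; in particular $\|S\| \le 1 + \sqrt{1-\epsilon}$ and $\|S^{-1}\| \le (1-\sqrt{1-\epsilon})^{-1}$. It then remains to read off the Riesz basis property. For any finitely supported scalar sequence $\{a_i\}$ we have $\sum_i a_i x_i = S\!\left(\sum_i a_i e_i\right)$, and since $\{e_i\}$ is orthonormal,
\[
\|S^{-1}\|^{-1}\left(\sum_i |a_i|^2\right)^{1/2} \le \left\|\sum_i a_i x_i\right\| \le \|S\|\left(\sum_i |a_i|^2\right)^{1/2},
\]
which gives the Riesz sequence bounds with $A = 1 - \sqrt{1-\epsilon} > 0$ and $B = 1 + \sqrt{1-\epsilon} < \infty$. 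Finally, $\overline{\lspan}\,X = \overline{S\!\left(\lspan\{e_i\}\right)} = \overline{S(\ell_2)} = \ell_2$ by surjectivity and boundedness of $S$, so $X$ is a Riesz basis for $\ell_2$, not merely a Riesz sequence.

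There is no genuine obstacle here; the only point that needs care is the very first inequality, namely noticing that the hypothesis $\sum_i \|x_i - e_i\|^2 \le 1 - \epsilon$ is exactly what is needed to pass, via Cauchy--Schwarz, from $\ell_2$-summability of the perturbations $\|x_i - e_i\|^2$ to an \emph{operator-norm} bound strictly below $1$ on the coefficient side. Once that is in hand, the remainder is the standard Neumann-series argument together with the elementary fact that a bounded invertible operator maps an orthonormal basis to a Riesz basis.
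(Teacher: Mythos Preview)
Your proposal is correct and follows essentially the same approach as the paper: define the operator sending $e_i$ to $x_i$, bound its difference from the identity by $\sqrt{1-\epsilon}$ via the triangle inequality and Cauchy--Schwarz, invoke the Neumann series to get invertibility, and conclude that the image of an orthonormal basis under a bounded invertible operator is a Riesz basis. The only cosmetic difference is that the paper names the full map $T$ (so $Te_i=x_i$) and bounds $\|I-T\|$, whereas you name the perturbation part $T$ and set $S=I+T$; your write-up is in fact slightly more careful with the exponents and with spelling out the Riesz bounds and the spanning conclusion.
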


\begin{proof}
Define an operator $T:\ell_2 \rightarrow \ell_2$ by $Te_i=x_i$,
for all $i=1,2,\ldots$.  Given $a=\sum_{i=1}^{\infty}a_ie_i \in \ell_2$ we have
\begin{align*}
\|(I-T)a\|^2 &= \|\sum_{i=1}^{\infty}a_i(e_i-x_i)\|^2\\
&\le \sum_{i=1}^{\infty}|a_i|\|e_i-x_i\|\\
&\le \left ( \sum_{i=1}^{\infty}|a_i|^2\right ) \left (
\sum_{i=1}^{\infty}\|x_i-e_i\|^2 \right )\\
&\le (1-\epsilon)\|a\|^2.
\end{align*}
It follows that $T$ is an invertible operator and so $\{Te_i\}_{i=1}^{\infty}$ is a Riesz basis for $\ell_2$.
\end{proof}

\begin{proposition}
The families of vectors which do phase retrieval 
in $\ell_2$ are not dense
 in the infinite families of vectors in $\ell_2$.
\end{proposition}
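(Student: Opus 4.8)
The plan is to combine Proposition~\ref{pp1} with the Remark that a Riesz basis cannot do phase retrieval (because it fails the complement property), and then invoke the theorem characterizing phase retrieval in $\ell_2$ by the complement property. The target sequence around which approximation will fail is simply the canonical orthonormal basis itself.

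Concretely, I would take $Y=\{e_i\}_{i=1}^{\infty}$ and claim that no family doing phase retrieval comes within distance $1$ of $Y$ in the metric $d$. Suppose $X=\{x_i\}_{i=1}^{\infty}\subset\ell_2$ satisfies
\[ d(X,Y)^2=\sum_{i=1}^{\infty}\|x_i-e_i\|^2<1. \]
Setting $\epsilon=1-d(X,Y)^2>0$, we have $\sum_{i=1}^{\infty}\|x_i-e_i\|^2\le 1-\epsilon$, so Proposition~\ref{pp1} applies and $X$ is a Riesz basis for $\ell_2$. By the Remark, a Riesz basis fails the complement property, and hence by the complement-property characterization of phase retrieval, $X$ does not do phase retrieval.

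Now feed this into the definition of density: taking $\epsilon_0=1$ (any value $\le 1$ works) and the particular sequence $Y=\{e_i\}_{i=1}^{\infty}$, there is no $X\in\mathcal{F}$ doing phase retrieval with $d(X,Y)^2<\epsilon_0$, so the families of vectors doing phase retrieval are not dense in the infinite families of vectors in $\ell_2$. There is essentially no obstacle in this argument — all the real work is already contained in Proposition~\ref{pp1}; the only point requiring care is that density is quantified over \emph{all} $\epsilon>0$, so it suffices to exhibit a single $\epsilon_0$ and a single target $Y$ for which approximation within $\epsilon_0$ is impossible.
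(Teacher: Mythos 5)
Your proposal is correct and follows essentially the same route as the paper: approximate the canonical basis $\{e_i\}_{i=1}^{\infty}$, invoke Proposition~\ref{pp1} to conclude any family within squared distance less than $1$ is a Riesz basis, and use the remark that a Riesz basis fails the complement property and hence phase retrieval. Your write-up is in fact slightly more careful than the paper's about the choice of $\epsilon$ and the quantifiers in the density definition, but the argument is the same.
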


\begin{proof}
Let $0<\epsilon <1$.  If $X=\{x_i\}_{i=1}^{\infty}$ is any family of
unit vectors with
\[ \sum_{i=1}^{\infty}\|x_i-y_i\|^2 < 1-\epsilon,\]
then $X$ is a Riesz basis and
hence cannot do phase retrieval.
\end{proof}

It is known in finite dimensions \cite{B,CCJ} that if $X=\{x_i\}_{i=1}^m$ does phase retrieval, there is an $\epsilon>0$ so that whenever
$Y=\{y_i\}_{i=1}^m$ satisfies:
\[ \sum_{i=1}^m\|x_i-y_i\|^2 < \epsilon,\]
then $Y$ does phase retrieval.  The above is called a
{\bf $\epsilon$-perturbation} of $X$. The corresponding result fails in
$\ell_2$ as was shown in \cite{CCD}.

\begin{theorem}
Given a frame $\{x_i\}_{i=1}^{\infty}$ doing phase retrieval in $\ell_2$ and an $\epsilon>0$, there is a frame $\{y_i\}_{i=1}^\infty$ which fails phase
retrieval in $\ell_2$ and satisfies:
\[ \sum_{i=1}^{\infty}\|x_i-y_i\|^2< \epsilon.\]
\end{theorem}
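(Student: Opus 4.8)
The plan is to invoke the theorem that a family of vectors in real $\ell_2$ does phase retrieval if and only if it has the complement property, and then to destroy the complement property of $\{x_i\}$ by a perturbation supported entirely in a far tail of the frame. The feature of $\ell_2$ that makes this work, and which fails in $\RR^n$, is that \emph{finitely many vectors never span $\ell_2$}: so I will leave the head $x_1,\ldots,x_N$ untouched and only bend the tail $\{x_i\}_{i>N}$ into a fixed hyperplane. Then, splitting the index set as $I=\{1,\ldots,N\}$ and $I^c=\{N+1,N+2,\ldots\}$, neither half of the perturbed family will span $\ell_2$, so the complement property fails.

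In detail: it suffices to treat $0<\epsilon<A$, where $A$ is the lower frame bound, since the family produced for a smaller $\epsilon$ also works for any larger one. Since $\{x_i\}$ has upper frame bound $B$, the series $\sum_{i=1}^\infty|\langle e_1,x_i\rangle|^2\le B$ converges, so I can choose $N$ with $\sum_{i>N}|\langle e_1,x_i\rangle|^2<\epsilon$. Define $y_i=x_i$ for $i\le N$ and $y_i=x_i-\langle x_i,e_1\rangle e_1$ for $i>N$. Then $\sum_{i=1}^\infty\|x_i-y_i\|^2=\sum_{i>N}|\langle x_i,e_1\rangle|^2<\epsilon<A$, and the triangle-inequality estimate $\big(\sum_i|\langle v,y_i\rangle|^2\big)^{1/2}\ge\big(\sum_i|\langle v,x_i\rangle|^2\big)^{1/2}-\big(\sum_i\|x_i-y_i\|^2\big)^{1/2}\|v\|$ shows $\{y_i\}$ is again a frame (lower bound $(\sqrt{A}-\sqrt{\sum_i\|x_i-y_i\|^2})^2>0$, finite upper bound). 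Finally, with $I=\{1,\ldots,N\}$, $\overline{\lspan}\{y_i\}_{i\in I}=\lspan\{x_1,\ldots,x_N\}$ is a subspace of dimension at most $N$, hence a proper subspace of $\ell_2$; and $\langle y_i,e_1\rangle=0$ for every $i>N$, so $\overline{\lspan}\{y_i\}_{i\in I^c}\subseteq e_1^\perp\ne\ell_2$. Thus the complement property fails for $\{y_i\}$, and by the theorem above $\{y_i\}$ fails phase retrieval while being a frame with $\sum_i\|x_i-y_i\|^2<\epsilon$.

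There is no genuine obstacle once this trick is seen; the only points that need a line of care are that $\{y_i\}$ stay a bona fide frame (handled by the reduction to $\epsilon<A$ and the estimate above) and that the tail mass $\sum_{i>N}|\langle e_1,x_i\rangle|^2$ really does go to $0$, which is immediate from the frame upper bound. The conceptual point is that, unlike in $\RR^n$, the perturbation never needs to disturb the vectors indexed by $I^c$ at all: its complement $I$ is finite and hence automatically fails to span the infinite-dimensional space. This is exactly why the finite-dimensional perturbation-stability theorem for phase retrieval has no analogue in $\ell_2$.
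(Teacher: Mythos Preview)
Your argument is correct. The paper itself does not supply a proof of this theorem; it merely cites \cite{CCD} (Cahill--Casazza--Daubechies). Your proof is self-contained and follows exactly the idea used there: exploit the upper frame bound to find a tail $\{x_i\}_{i>N}$ whose $e_1$-components have small $\ell_2$-mass, then project that tail onto $e_1^\perp$. The resulting partition $I=\{1,\dots,N\}$ versus $I^c$ witnesses the failure of the complement property, since finitely many vectors cannot span $\ell_2$ and the tail now lies in a hyperplane. The only technical point requiring care is that $\{y_i\}$ remains a frame, and your reduction to $\epsilon<A$ together with the Paley--Wiener-type perturbation estimate handles this cleanly. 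So your proof fills in what the paper leaves to the reference, by the same mechanism.
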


\begin{definition}
A set of vectors $\{x_i\}_{i=1}^{\infty}$ in $\ell_2$
 is
{\bf finitely full spark} if for every $I\subset \NN$ with
$|I|=n$, $\{P_Ix_i\}_{i=1}^{\infty}$ is full spark (i.e.
spark n+1), where $P_I$ is the orthogonal projection onto $\lspan\{e_i\}_{i\in I}$.
\end{definition}

We will have to generalize the definition of full spark.

\begin{definition}
A set of vectors $\{x_i\}_{i=1}^m$ in $\RR^n$ is full spark if
either they are independent or if $m\ge n+1$, then they have spark
$n+1$.
\end{definition}

\begin{proposition}
The finitely full spark families of vectors in $\ell_2$ are dense
in the infinite families of vectors in $\ell_2$.  In particular,
there are Riesz bases for $\ell_2$ which are finitely full spark,
and these families cannot do phase retrieval.
\end{proposition}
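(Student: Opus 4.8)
The plan is to express the property ``finitely full spark'' as a countable intersection of open dense conditions in the metric space of sequences near a prescribed $Y$, apply the Baire category theorem, and then read off the ``in particular'' from Proposition~\ref{pp1}. First I would unwind the definition: a sequence $X=\{x_i\}_{i=1}^{\infty}\subset\ell_2$ is finitely full spark exactly when, for every $n\in\NN$, every $I\subset\NN$ with $|I|=n$, and every $J\subset\NN$ with $|J|=n$, the $n$ vectors $\{P_Ix_j\}_{j\in J}$ are linearly independent in $\lspan\{e_i\}_{i\in I}$; equivalently, the determinant $\Delta_{I,J}(X)$ of the $n\times n$ matrix $[\,x_j(i)\,]_{i\in I,\,j\in J}$ is nonzero. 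There are only countably many pairs $(I,J)$ of equal-size finite subsets of $\NN$; enumerate them as $(I_1,J_1),(I_2,J_2),\ldots$.

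Next, fix $Y=\{y_i\}_{i=1}^{\infty}\subset\ell_2$ and $\epsilon>0$, and set $M=\{X:d(X,Y)<\sqrt{\epsilon}\}$, where $d(X,Y)^2=\sum_i\|x_i-y_i\|^2$. The collection of all sequences at finite $d$-distance from $Y$ is a complete metric space (an affine translate of the Hilbert space $\ell_2(\NN;\ell_2)$), so its open ball $M$ is a Baire space. For each $k$ let $U_k=\{X\in M:\Delta_{I_k,J_k}(X)\neq0\}$. Since the coordinate evaluations $X\mapsto x_j(i)$ are $d$-continuous and $\Delta_{I_k,J_k}$ is a polynomial in finitely many of them, $U_k$ is open in $M$; it is also dense in $M$, because given $X\in M$ and $\sigma>0$ one can add to the vectors $\{x_j\}_{j\in J_k}$ correction vectors $\{w_j\}_{j\in J_k}$ supported on $\{e_i\}_{i\in I_k}$, of total norm less than $\sigma$ and small enough to keep the sequence in $M$, chosen so that $\Delta_{I_k,J_k}\neq0$ for the perturbed sequence (using that linearly independent $n$-tuples are dense in $(\RR^{n})^{n}$), while leaving every other $x_i$ and every other coordinate fixed. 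By the Baire category theorem $\bigcap_kU_k$ is dense in $M$, hence nonempty, and any member of it is a finitely full spark family $X$ with $d(X,Y)^2<\epsilon$. This proves the density statement.

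For the last sentence, apply the above with $Y=\{e_i\}_{i=1}^{\infty}$ and $\epsilon=\tfrac12$ to obtain a finitely full spark family $X=\{x_i\}_{i=1}^{\infty}$ with $\sum_i\|x_i-e_i\|^2<\tfrac12\le 1-\tfrac12$; Proposition~\ref{pp1} then shows $X$ is a Riesz basis for $\ell_2$. A Riesz basis fails the complement property---deleting a single vector leaves a sequence whose closed span misses that vector, while the single deleted vector does not span $\ell_2$---so by the complement-property characterization of phase retrieval recalled earlier, $X$ cannot do phase retrieval.

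The only step that I expect to require genuine care is the density of each $U_k$ inside the $\ell_2$-type metric: one must check that a perturbation of finitely many coordinates of finitely many vectors can already make a prescribed vanishing determinant nonzero, so that the perturbation can be made of arbitrarily small $d$-size and therefore cannot interfere with any of the other countably many conditions. Everything else is routine bookkeeping, and one can avoid Baire entirely via a diagonal construction: enumerate $(I_k,J_k)$, and at stage $k$ perturb the current sequence by less than $\min(r_{k-1},2^{-k}\sqrt{\epsilon})$, where $r_{k-1}>0$ is a common stability radius for the finitely many determinant conditions already achieved, so as to make $\Delta_{I_k,J_k}\neq0$; the resulting sequence is $d$-Cauchy, its limit lies within $\sqrt{\epsilon}$ of $Y$, and it satisfies every condition.
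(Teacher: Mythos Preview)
Your argument is correct. The paper proceeds differently: it builds the sequence by induction on $m$, choosing $x_{m+1}$ close to $y_{m+1}$ and outside a countable union $\mathcal F$ of proper closed subspaces of $\ell_2$ (one subspace for each finite coordinate set $I$ and each subset $I'\subset[m]$ that could create a linear dependence among $\{P_Ix_i\}_{i\le m+1}$). Both proofs ultimately rest on the same principle---countably many nowhere-dense bad sets cannot cover an open ball---but you invoke Baire once, globally, in the sequence space $\ell_2(\NN;\ell_2)$, whereas the paper invokes it (implicitly) at each inductive step inside $\ell_2$ itself. Your route is cleaner and sidesteps the bookkeeping of which finitely many determinant conditions must be protected when a new vector is added; the paper's route is more hands-on and is essentially the ``diagonal construction'' alternative you sketch at the end. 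Your treatment of the ``in particular'' clause---apply the density result with $Y=\{e_i\}$ and quote Proposition~\ref{pp1}, then observe that a Riesz basis fails the complement property---is exactly what the paper has in mind but does not spell out.
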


\begin{proof}
Let $\{y_i\}_{i=1}^{\infty}$ be a family of vectors in $\ell_2$
and fix $\epsilon >0$.
We will construct the vectors by induction.  To get started,
choose a vector $x_1$ with all non-zero coordinates
so that $\|x_1-y_1\|^2 < \frac{\epsilon}{2}$.
Now assume we have constructed vectors $\{x_i\}_{i=1}^m$ so
that for every $I\subset \NN$ with $|I|<\infty$, $\{P_Ix_i\}_{i=1}^m$ is full spark and $\|x_i-y_i\|^2 < \frac{\epsilon}{2^{i+1}}$.  For each finite subset $I\subset \mathbb{N}$, let
\[ \mathcal{G}_I=\bigcup\left\{\overline{\lspan}\left [ \{P_Ix_i\}_{i\in I'}\cup \{e_i\}_{i\in I^c}\right ]:I'\subset [m],\begin{cases}
 |I'|=m \mbox{ if } m+1\le |I|\\
 |I'|=|I|-1 \mbox{ if } |I|\le m
 \end{cases}\right\}.\]
 Let 
 $$\mathcal{F}=\bigcup_{n=1}^\infty\bigcup_{|I|=n}\mathcal{G}_I,$$
then $\mathcal{F}$ is a countable union of proper subspaces of 
$\ell_2$ and hence there exists a vector $y_{m+1}$ not
in $\mathcal{F}$ and $\|x_{m+1}-y_{m+1}\|^2 < \frac{\epsilon}{2^{m+1}}$. This provides the required family of finitely full
spark vectors. 
\end{proof}

\section{Lifting}

In this section we demonstrate an embedding of finite frames in higher dimensions such that the complement property is preserved, which we will refer to as ``lifting". We provide necessary and sufficient conditions for when such a construction is possible and an example to demonstrate problems that may arise in infinite dimensions. We begin with a few useful definitions.
\begin{defn} A frame $X = \{x_i\}_{i\in I}$ has the overcomplete complement property if for every $S\subset I$, either $\{x_i\}_{i\in S}$ or $\{x_i\}_{i\in S^c}$ spans and is linearly dependent, i.e. it is not a basis.
\end{defn}
The overcomplete complement property is a natural generalization of the usual complement property, as will be shown shortly. Next we specify exactly what types of embeddings we are considering.

\begin{defn} A frame $Y=\{y_i\}_{i=1}^m\subset \RR^{n+k}$ is a 
{\bf k-lifting} of a frame $\{x_i\}_{i=1}^m$ if 
\[ y_i|_{\RR^n}=x_i,\mbox{ for all }i=1, 2, \ldots, m.\]
\end{defn}

The next theorem classifies when 1-lifts are possible and provides a construction for the choice of coordinates to adjoin. 
\begin{theorem}\label{FiniteLift}A frame
$X=\{x_i\}_{i=1}^m\subset \RR^n$ can be $1$-lifted if and only if $X$ has the overcomplete complement property.
\end{theorem}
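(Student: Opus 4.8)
The plan is to encode a $1$-lifting of $X=\{x_i\}_{i=1}^m\subset\RR^n$ by a single vector $c=(c_1,\dots,c_m)\in\RR^m$, setting $y_i=(x_i,c_i)\in\RR^{n+1}$, and then to reduce the existence of a lifting with the complement property to the standard fact that a real vector space is never a finite union of proper subspaces.

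First I would record two elementary reformulations. (i) For $T\subseteq[m]$, the lifted family $\{y_i\}_{i\in T}$ spans $\RR^{n+1}$ precisely when $\{x_i\}_{i\in T}$ spans $\RR^n$ \emph{and} the vector $(c_i)_{i\in T}$ is not in the row space $R_T\subseteq\RR^T$ of the $n\times|T|$ matrix with columns $\{x_i\}_{i\in T}$: indeed, the $(n+1)\times|T|$ matrix of the $y_i$ has rank $n+1$ iff its first $n$ rows already have rank $n$ and its last row is linearly independent from them. In particular this forces $|T|\ge n+1$, since $R_T=\RR^T$ whenever $\{x_i\}_{i\in T}$ is a basis of $\RR^n$. (ii) For a frame, ``$\{x_i\}_{i\in S}$ spans $\RR^n$ and is linearly dependent'' is equivalent to ``$\{x_i\}_{i\in S}$ spans $\RR^n$ and $|S|\ge n+1$.'' Hence, setting $\mathcal{B}=\{\,T\subseteq[m]:\{x_i\}_{i\in T}\text{ spans }\RR^n\text{ and }|T|\ge n+1\,\}$, the overcomplete complement property of $X$ says exactly: for every $S\subseteq[m]$, $S\in\mathcal{B}$ or $S^c\in\mathcal{B}$. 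Necessity is then immediate: if $Y=\{(x_i,c_i)\}_{i=1}^m$ has the complement property in $\RR^{n+1}$, then for each $S$ one of $\{y_i\}_{i\in S}$, $\{y_i\}_{i\in S^c}$ spans $\RR^{n+1}$, so by (i) the corresponding index set lies in $\mathcal{B}$; thus $X$ has the overcomplete complement property.

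For sufficiency, assume $X$ has the overcomplete complement property. For each $T\in\mathcal{B}$ let $V_T=\{c\in\RR^m:(c_i)_{i\in T}\in R_T\}$, the preimage of $R_T$ under the coordinate projection $\RR^m\to\RR^T$. Since $T\in\mathcal{B}$ we have $\dim R_T=n<|T|$, so $R_T\subsetneq\RR^T$ and therefore $V_T\subsetneq\RR^m$ is a proper subspace. As $\mathcal{B}$ is finite and $\RR^m$ is not a finite union of proper subspaces, I can pick $c\notin\bigcup_{T\in\mathcal{B}}V_T$. By (i), this $c$ makes $\{y_i\}_{i\in T}$ span $\RR^{n+1}$ for every $T\in\mathcal{B}$, and combining with the reformulation of the overcomplete complement property, for every $S$ at least one of $\{y_i\}_{i\in S}$, $\{y_i\}_{i\in S^c}$ spans $\RR^{n+1}$. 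Hence $Y$ has the complement property; taking $S=\emptyset$ shows in addition that $Y$ spans $\RR^{n+1}$, so $Y$ is genuinely a frame, i.e. a $1$-lifting.

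The only real content — and the step to be careful about — is the properness of the subspaces $V_T$: this is exactly where the ``overcomplete'' strengthening of the complement property (the bound $|T|\ge n+1$) is used. If $X$ had only the ordinary complement property, one could be forced into an $S$ with $|S|=|S^c|=n$ for which both halves are bases of $\RR^n$; then $R_S=\RR^S$ and $R_{S^c}=\RR^{S^c}$, no choice of $c$ lets either lifted half span $\RR^{n+1}$, and no $1$-lifting with the complement property exists. Everything else is bookkeeping together with the standard finite-union-of-subspaces fact (and, if desired, the theorem identifying the complement property with doing phase retrieval, so the construction also preserves phase retrieval).
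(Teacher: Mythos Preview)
Your proof is correct and takes essentially the same approach as the paper: encode a $1$-lifting by a single vector in $\RR^m$, identify for each relevant index set a proper subspace of ``bad'' choices, and pick the lifting vector outside their finite union. The only differences are cosmetic---you avoid the full row-space preimage $V_T$ (giving an exact iff criterion for the lifted half to span) where the paper avoids just one hyperplane $\alpha_S^\perp\times\RR^{|S^c|}$ coming from a single chosen dependence relation, and your necessity direction is spelled out more completely than the paper's brief version.
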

\begin{proof}
For the sufficiency we shall provide a constructive proof. The idea of the proof will be to produce a vector $v\in\RR^m$ such that the $i^{th}$ coordinate of $v$ will be the $(n+1)^{th}$ coordinate of $\hat{x}_i$. Given a subset $S\subset [m]$, by assumption either $X_S=\{x_i\}_{i\in S}$ or $X_{S^c}=\{x_i\}_{i\in S^c}$ spans $\RR^n$ and is linearly dependent. We begin by demonstrating an embedding of vectors from the spanning set that still span in $\RR^{n+1}$. Without loss of generality, in our notation we shall assume $X_S$ is always the overcomplete spanning set of vectors. Then for some choice of coefficients we have $\sum_{i\in S} \alpha_ix_i=0$ where $\alpha_i$ are not all zero. Denote $\alpha_S=(\alpha_1,\alpha_2,...,\alpha_{|S|})\in \RR^{|S|}$ and pick $\beta_S\in\RR^{|S|}$ such that $\langle \alpha_S,\beta_S\rangle\neq 0$. Define the embedded vectors $\hat{X}_S=\{\hat{x}_i\}_{i\in S}\in\RR^{n+1}$ as follows
\[ \hat{x}_i(j)=\begin{cases} 
      x_i(j) & j\in [n] \\
      \beta_S(i) & j=n+1. 
   \end{cases}\]
To show that $\hat{X}_S$ spans $\RR^{n+1}$, observe $\frac{1}{\langle \alpha_S,\beta_S\rangle}\sum_{i\in S}\alpha_i\hat{x}_i=e_{n+1}$. Since $X_S$ spans $\RR^n$, it follows that $\hat{X}_S$ spans $\RR^{n+1}$.\\
This construction gives a procedure for an embedding which spans the larger space $\RR^{n+1}$, but is dependent on the subset $S$. Also observe we haven't posed any conditions on how to extend the vectors in $S^c$. For each choice of $S$, we have the associated vectors $\alpha_S,\beta_S\in\RR^{|S|}$. Let $H_S\subset \RR^{|S|}$ denote the hyperplane perpendicular to $\alpha_S$. Then our construction depends on being able to choose a vector in the complement of $H_S$ for all subsets $S$.  But the cardinality of $S$ is changing as we range over all possibilities. To overcome this we will work with the larger space $\RR^m=\RR^{|S|}\times\RR^{|S^c|}$. There are finitely many choices of $S$ therefore $\bigcup_{S\subset [m]} H_S\times\RR^{|S^c|}\neq \RR^m$. Then for  
$v\in \left(\bigcup_{S\subset [m]} H_S\times\RR^{|S^c|}\right)^c$ we defined 
\[ \hat{x}_i(j)=\begin{cases} 
      x_i(j) & j\in [n] \\
      v(i) & j=n+1. 
   \end{cases}\]
Then it follows that $\hat{X}=\{\hat{x}_i\}_{i=1}^m$ has the complement property in $\RR^{n+1}$.
\vskip10pt
For necessity assume $X$ does phase retrieval but does not have the overcomplete complement property. Any spanning set that is a basis cannot be 1-lifted since there will not be enough vectors to span $\RR^{n+1}$.
\end{proof}

The result above may be generalized for a $k$-lift with minimal effort. Naturally the overcompleteness of each subset $S$ is critical in determining what integers $k$ are plausible. More specifically, we define the lifting number of a phase retrievable frame as follows:

\begin{definition} Given a frame $X = \{x_i\}_{i\in [m]}\subset\RR^n$,  let 
\[L_{X}=\min\{|S|-n: \lspan \{x_i\}_{i\in S}=\RR^n\text{  and  } |S|\geq |S^c|\},\]
then $L_X$ is the lifting number for the frame $X$.
\end{definition}

From the previous theorem we see immediately that if $\Phi$ has the overcomplete complement property then $L_X\geq 1$. The lifting number tells us how many dimensions higher we can lift $\Phi$. If $L_X\geq 1$ then when we 1-lift, each overcomplete spanning subset will be lifted to a spanning set in $\RR^{n+1}$ with the same cardinality. If $L_X >1$ that means each spanning subset with higher cardinality ($S$ or $S^c$) will be lifted to a spanning set which is still not a basis in $\RR^{n+1}$, hence can be lifted again. The idea is that after each lift, the lifting number of the subsequent lifted frame $\hat{\Phi}$ is one smaller than $L_X$. That is, if $\hat{X}$ is a 1-lift of $X$ then $L_{\hat{X}}=L_X -1$.

\begin{corollary}
$X\subset \RR^n$ can be $k$-lifted if and only if $k\leq L_X$.
\end{corollary}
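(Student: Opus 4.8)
The plan is to relate a $k$-lift to an iterated chain of single lifts and to track the lifting number along the chain, using Theorem~\ref{FiniteLift} for each step and the relation $L_{\hat X}=L_X-1$ (a single $1$-lift lowers the lifting number by exactly one) as the engine.

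\emph{Necessity} ($X$ can be $k$-lifted $\Rightarrow k\le L_X$). I would argue directly, without induction. Suppose $Y=\{y_i\}_{i=1}^m\subset\RR^{n+k}$ is a $k$-lift of $X$ with the complement property, and let $T\subset[m]$ be any subset with $\lspan\{x_i\}_{i\in T}=\RR^n$ and $|T|\ge|T^c|$, i.e. any $T$ over which $L_X$ is minimized. Applying the complement property of $Y$ to the partition $(T,T^c)$, one of $\{y_i\}_{i\in T}$, $\{y_i\}_{i\in T^c}$ spans $\RR^{n+k}$ and hence has at least $n+k$ elements; since $|T|\ge|T^c|$ this forces $|T|\ge n+k$, so $|T|-n\ge k$. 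Minimizing over all such $T$ gives $L_X\ge k$. (When $k=0$ both sides are trivial: take $Y=X$, and note $L_X\ge 0$ since any spanning subset has at least $n$ vectors.)

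\emph{Sufficiency} ($k\le L_X\Rightarrow X$ can be $k$-lifted). I would induct on $k$, the base case $k=0$ being $Y=X$. For $k\ge 1$, the first step is to show that $L_X\ge 1$ forces $X$ to have the overcomplete complement property, so that Theorem~\ref{FiniteLift} produces a $1$-lift $\hat X\subset\RR^{n+1}$ with the complement property; the second step is to show this $\hat X$ satisfies $L_{\hat X}\ge L_X-1\ge k-1$. Granting both, the induction hypothesis applied to $\hat X\subset\RR^{n+1}$ yields a $(k-1)$-lift $Y\subset\RR^{(n+1)+(k-1)}=\RR^{n+k}$ of $\hat X$ with the complement property; since restricting $Y$ to $\RR^n$ recovers $X$, the frame $Y$ is a $k$-lift of $X$ with the complement property, which closes the induction.

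I expect the main obstacle to be the behaviour of the lifting number under a single lift, i.e. both $L_X\ge 1\Rightarrow$ overcomplete complement property and $L_{\hat X}\ge L_X-1$. The delicate point is that $L_X$ only records the size of the \emph{larger} side of each partition, whereas the complement property of the lift must be checked on \emph{every} partition $(A,A^c)$; the dangerous configurations are those in which the larger side spans $\RR^n$ merely as a basis (so it gains no dimension when lifted) while the smaller side fails to span $\RR^n$ at all. One must verify that such a partition is incompatible with $L_X\ge 1$, and then identify precisely which subsets $\{\hat x_i\}_{i\in I}$ span $\RR^{n+1}$ after the construction of Theorem~\ref{FiniteLift} — essentially those $I$ with $\{x_i\}_{i\in I}$ spanning $\RR^n$ and $|I|\ge n+1$, together with the basis subsets whose dependency vector in $\RR^m$ is not orthogonal to the adjoined vector $v$ — in order to conclude $L_{\hat X}\ge L_X-1$. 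This bookkeeping over spanning subsets is the crux; everything else is the routine induction above.
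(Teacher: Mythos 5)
Your route is the one the paper intends (the paper in fact prints no proof of this corollary: it is meant to follow by iterating Theorem~\ref{FiniteLift} together with the heuristic $L_{\hat X}=L_X-1$), and your necessity half is fine and more careful than anything in the text: projecting a spanning side of any partition of the $k$-lift back to $\RR^n$ and counting does give $|T|-n\ge k$ for every spanning $T$ with $|T|\ge|T^c|$, hence $L_X\ge k$.

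The genuine gap is in the sufficiency step you yourself single out as the crux: the implication ``$L_X\ge 1$ forces the overcomplete complement property'' is false. You rule out only the configuration in which the \emph{larger} side of a partition is a basis (that one indeed gives $L_X\le 0$), but the definition of $L_X$ never inspects a partition whose larger side fails to span; the dangerous case is a partition whose \emph{smaller} side is a basis while the larger side does not span. Concretely, take in $\RR^2$ the frame $X=\{e_2,\ e_1+e_2,\ e_1,\ 2e_1,\ 3e_1,\ 4e_1,\ 5e_1\}$. It has the complement property (any partition puts two non-collinear vectors on one side), and every spanning $S$ with $|S|\ge|S^c|$ has $|S|\ge4$, so $L_X=2$. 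Yet for $A=\{e_1,2e_1,3e_1,4e_1,5e_1\}$, $A^c=\{e_2,e_1+e_2\}$ the spanning side $A^c$ is a basis and $A$ lies in a line, so $X$ fails the overcomplete complement property and cannot even be $1$-lifted: any lift of $A$ lies in the span of two coordinate directions of $\RR^3$ and any lift of $A^c$ has only two vectors, so no $1$-lift has the complement property. Thus your induction cannot start from $k\le L_X$ alone, and the corollary as literally stated (with $L_X$ defined by that minimum) is not recoverable by this argument; what the iteration actually needs is a partition-wise lifting number, e.g.\ the minimum over all partitions $(S,S^c)$ of the largest excess $|T|-n$ among the sides $T$ that span $\RR^n$, for which both your counting argument and the decrement-by-one step do go through.
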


\begin{theorem}
If a frame $X\subset\RR^n$ contains $2n + 2m +1$ vectors with a $2n +2m$ full spark subset. $X$ can be $(m+1)$-lifted.
\end{theorem}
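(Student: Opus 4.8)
The plan is to deduce the statement from the preceding Corollary, which says that a frame can be $k$-lifted exactly when $k\le L_X$. Thus there are two things to verify: first, that $X$ is the kind of frame to which the lifting machinery of this section applies --- i.e. that $X$ itself does phase retrieval in $\RR^n$ (equivalently, has the overcomplete complement property), since the necessity arguments of Theorem \ref{FiniteLift} and its Corollary are run under that standing assumption --- and second, that the lifting number satisfies $L_X\ge m+1$.

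For the first point, let $F\subset X$ be the distinguished full spark subset of $2n+2m$ vectors. Since $2n+2m\ge 2n-1$, a full spark family of that size in $\RR^n$ has the complement property, and the complement property is inherited by any superset, so $X$ has it and hence does phase retrieval in $\RR^n$. Moreover $X$ has the overcomplete complement property: given a partition $[2n+2m+1]=S\cup S^c$, one of the two pieces --- say $S$ --- has $|S|\ge n+m+1$, so $|S\cap F|\ge |S|-1\ge n+m\ge n$, whence $\{x_i\}_{i\in S}$ spans $\RR^n$ and, having at least $n+1$ vectors, is linearly dependent. For the second point one computes $L_X$ directly. If $S\subset[2n+2m+1]$ satisfies $\lspan\{x_i\}_{i\in S}=\RR^n$ and $|S|\ge|S^c|$, then $2|S|\ge|S|+|S^c|=2n+2m+1$, so $|S|\ge n+m+1$ and therefore $|S|-n\ge m+1$; and the defining collection is nonempty, since a basis chosen from among the $x_i$ (possible as $X$ is a frame) together with any $m+1$ further vectors is a spanning set $S$ with $|S|=n+m+1\ge n+m=|S^c|$. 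Hence $L_X=m+1$, and applying the Corollary with $k=m+1\le L_X$ shows $X$ can be $(m+1)$-lifted.

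The step that requires genuine care is the first one: the numerical identity $L_X=m+1$ is just the arithmetic of having $2n+2m+1$ vectors and uses nothing about full spark, so the full spark hypothesis is really there to guarantee that $X$ already does phase retrieval (has the overcomplete complement property), which is what licenses the use of the lifting Corollary in the first place. One could instead avoid the Corollary and argue by performing $m+1$ successive $1$-lifts via Theorem \ref{FiniteLift}, checking that each intermediate frame $X^{(j)}\subset\RR^{n+j}$ still has the overcomplete complement property for $j=0,1,\dots,m$ (each overcomplete spanning side, of size $\ge n+m+1$, remaining strictly overcomplete until dimension $n+m+1$); but this is essentially a proof of the Corollary in the present special case, so routing through the Corollary is the economical path.
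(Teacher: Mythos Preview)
Your proof is correct and follows the same route as the paper: show that $X$ does phase retrieval via its $2n+2m$ full spark subset, compute the lifting number $L_X=m+1$, and invoke the Corollary. Your argument is in fact tidier than the paper's---you observe that the constraint $|S|\ge|S^c|$ alone forces $|S|\ge n+m+1$ by pure arithmetic, whereas the paper splits into cases on $|S\cap H|$ and appeals to full spark in each case to obtain spanning; your remark that the full spark hypothesis is really only needed to guarantee the overcomplete complement property (so that the Corollary applies), and not for the numerical value of $L_X$, is a correct sharpening that the paper's case analysis obscures.
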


\begin{proof}
Clearly if a frame contains a $2n + 2m$ full spark subset it does phase retrieval as it contains a $2n-1$ full spark subset which already does phase retrieval. Let $X = \{x_i\}_{i\in [2n+2m+1]}$ and $H = \{x_i\}_{i\in [2n+2m]}$ be a full spark subset. Given any $S\subset [2n+2m]$, if $S$ contains more than half the elements in $H$ then it will be a spanning set with more than $n+m$ vectors hence its cardinality minus $n$ will be greater than $m$ hence at least $m+1$. If it contains less than half of the elements of $H$ then the same holds for $S^c$. If it contained exactly half then both $S$ and $S^c$ will contain $n+m$ elements of $H$ hence they both span. Whichever set that contains $x_{2n+2m+1}$ will be a spanning set of cardinality $n+m+1$. Hence $X$ will have lifting number $m+1$.
\end{proof}
The set of $2n + 2m +1$ full spark vectors is open, dense, and contains a subset of $2n+2m$ full spark vectors. Then the previous theorem shows the set of $2n + 2m +1$ vectors in $\RR^n$ that can be $m+1$-lifted contains an open dense set. Hence ``almost every" set of $2n + 2m +1$ or $2n + 2m +2$ vectors can be $m+1$-lifted.

The infinite dimensional version of this looks like the following.
Note that this is not a classification of the liftable phase
retrieving frames but a sufficient condition.
\begin{remark} In $\ell_2$, by a lift we ``add'' a coordinate at the beginning of the vector. That is, if $\hat{x}$ is a lift of $x$ then $P\hat{x}=(0,x(1), x(2), \ldots),$ where $P$ is the orthogonal projection onto $e_1^\perp$. 
\end{remark}

\begin{theorem}
Let $X=\{x_i\}_{i=1}^{\infty}$ be a frame for $\ell_2$ doing phase
retrieval and let $Y=\{y_i\}_{i=1}^{\infty}$ be a linearly dependent
spanning set in $\ell_2$.  Then $X \cup Y$ can be lifted to a
phase retrieving frame for $\ell_2$.
\end{theorem}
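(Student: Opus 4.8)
The plan is to lift $X$ \emph{trivially} and to spend the single new coordinate entirely on recording the linear dependency carried by $Y$. Write the lifted space (still $\ell_2$) with orthonormal basis $e_1,e_2,\dots$, where $e_1$ is the adjoined coordinate, and for $v\in\ell_2$ let $\tilde v=(0,v(1),v(2),\dots)$ be its shifted copy, so $v\mapsto\tilde v$ is an isometric isomorphism of $\ell_2$ onto $e_1^\perp$. Define $\hat x_i:=\tilde x_i$ for all $i$. Since $Y$ is linearly dependent, fix a finite $F$ and scalars $(\alpha_j)_{j\in F}$, not all zero, with $\sum_{j\in F}\alpha_jy_j=0$; then choose scalars $(d_j)_j$ with $d_j\ne 0$ for every $j$, $\sum_j d_j^2<\infty$, and $\gamma:=\sum_{j\in F}\alpha_jd_j\ne 0$ (start from any nonzero square-summable sequence and, if $\gamma=0$, perturb one $d_{j_1}$ with $j_1\in F$, $\alpha_{j_1}\ne 0$). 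Set $\hat y_j:=d_je_1+\tilde y_j$. I claim $\{\hat x_i\}_i\cup\{\hat y_j\}_j$ is the desired lift.

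The heart of the argument is the complement property. Fix a partition $I\sqcup I^c$ of the index set of $X\cup Y$, and let $I_X,I_Y$ (resp. $I_X^c,I_Y^c$) be the portions of $I$ (resp. $I^c$) over the indices of $X$, resp. $Y$. Two observations suffice. \emph{(A)} If $\overline{\lspan}\{x_i\}_{i\in I_X}=\ell_2$ and $I_Y\ne\emptyset$, then the subfamily of lifts indexed by $I$ spans the lifted $\ell_2$: indeed $\{\hat x_i\}_{i\in I_X}=\{\tilde x_i\}_{i\in I_X}$ has closed span $e_1^\perp$, and for any $j_0\in I_Y$ we have $e_1=d_{j_0}^{-1}(\hat y_{j_0}-\tilde y_{j_0})$ in that closed span, which is then all of $\ell_2$. \emph{(B)} If every index of $Y$ lies in $I$, then the subfamily indexed by $I$ spans the lifted $\ell_2$: here $\sum_{j\in F}\alpha_j\hat y_j=\gamma e_1+\sum_{j\in F}\alpha_j\tilde y_j=\gamma e_1$ by linearity of the shift and $\sum_{j\in F}\alpha_jy_j=0$, so $e_1$ is in the closed span; hence so is each $\tilde y_j=\hat y_j-d_je_1$, and since $\overline{\lspan}\{y_j\}_j=\ell_2$ these exhaust $e_1^\perp$, giving all of $\ell_2$. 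Now conclude: if $I_Y$ and $I_Y^c$ are both nonempty, then $X$, doing phase retrieval, has the complement property, so one of $\overline{\lspan}\{x_i\}_{i\in I_X},\overline{\lspan}\{x_i\}_{i\in I_X^c}$ equals $\ell_2$, and (A) applied to $I$ or to $I^c$ finishes it; if $I_Y=\emptyset$, all of $Y$ lies in $I^c$ and (B) applies to $I^c$; if $I_Y^c=\emptyset$, (B) applies to $I$. Thus $\{\hat x_i\}\cup\{\hat y_j\}$ has the complement property, hence does phase retrieval.

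It remains to see that $\{\hat x_i\}\cup\{\hat y_j\}$ is a frame for the lifted $\ell_2$, so that ``frame'' in the conclusion is literal. Write $w=ae_1+w'$ with $w'\in e_1^\perp$ identified with a vector of $\ell_2$. The $X$-part contributes exactly $\sum_i|\langle w',x_i\rangle|^2\in[A_X\|w'\|^2,B_X\|w'\|^2]$ since $X$ is a frame. For the $Y$-part, $|\langle w,\hat y_j\rangle|^2\le 2a^2d_j^2+2|\langle w',y_j\rangle|^2$ gives a Bessel bound using $(d_j)\in\ell_2$ and the Bessel bound of $Y$, while Cauchy--Schwarz applied to $\sum_{j\in F}\alpha_j\langle w,\hat y_j\rangle=\gamma a$ gives $\sum_j|\langle w,\hat y_j\rangle|^2\ge\big(|\gamma|^2/\sum_{j\in F}|\alpha_j|^2\big)a^2$. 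Adding the two parts produces uniform two-sided frame bounds.

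The step I expect to require the most care in the write-up is making observations (A) and (B) cover \emph{every} subset $I$, including degenerate ones such as $I_X=\emptyset$ or $I$ containing all of $Y$; lifting $X$ trivially is precisely what transports $X$'s own complement property to $e_1^\perp$, so that afterwards only the single direction $e_1$ has to be recovered, and that is exactly what the dependency $(\alpha_j)$ and the choice $\gamma\ne 0$ deliver. (If $Y$ is assumed only to span and not to be Bessel, one should read ``$X\cup Y$ is a frame'' into the hypothesis, since the lifted family is a frame iff $X\cup Y$ is.)
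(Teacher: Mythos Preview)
Your construction coincides with the paper's: lift $X$ trivially into $e_1^\perp$ via the right shift, and lift each $y_j$ with a nonzero first coordinate $d_j$ (the paper's $v(j)$) chosen so that $\sum_{j\in F}\alpha_j d_j\ne 0$. The paper obtains such a sequence $v\in\ell_2$ from the Baire Category Theorem applied to $\bigl(\bigcup_i e_i^\perp\bigr)\cup\alpha^\perp$, whereas you build $(d_j)$ by hand with a one-entry perturbation; both are fine. The substantive difference is in verifying phase retrieval: the paper invokes Edidin's criterion (Theorem~\ref{Dan}), checking that for every nonzero $w$ the rank-one projections onto the lifted vectors span $\ell_2$, while you argue directly via the complement property. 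Your route is more elementary and your case split (A)/(B) cleanly covers every partition, including the degenerate cases you flag ($I_X=\emptyset$, or all of $Y$ on one side). You also supply explicit two-sided frame bounds, which the paper does not address; your caveat that the upper Bessel bound requires $Y$ (equivalently $X\cup Y$) to be Bessel is correct, and the paper's proof simply leaves this point unexamined.
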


\begin{proof}

Let $X=\{x_i\}_{i=1}^\infty$  and $Y=\{y_i\}_{i=1}^\infty$ be as in the theorem. We show that we can lift this union to one higher dimension and maintain phase retrieval.
Let $L$ be the right shift operator on $\ell_2$, i.e. if $x=(x(1), x(2), \ldots)$, then $Lx=(0, x(1), x(2), \ldots)$. Replace vectors in $X$ by $\hat{X}=\{\hat{x}_i\}_{i=1}^\infty$ where $\hat{x}_i=Lx_i$. The idea for $\hat{Y}=\{\hat{y}_i\}_{i=1}^\infty$ is very similar to the proof in Theorem \ref{FiniteLift}. We show existence of a vector $v=(v(1), v(2), \ldots)\in\ell_2$ such that 
$\hat{y}_i=v(i)e_1 +Ly_i$ will have the desired property to assure $\hat{X}\cup\hat{Y}$ does phase retrieval. 

Since
$\{y_i\}_{i=1}^\infty$ is linearly 
dependent, there exists a sequence of scalars $\alpha=\{\alpha_i\}_{i=1}^\infty$ with all but a finite number equal to zero,
such that $\sum_{i=1}^{\infty}\alpha_iy_i = 0$. Denote $H_i=e_i^\perp\subset \ell_2$ and note that by the Baire Category Theorem 
\[\left[(\cup_{i=1}^\infty H_i)\cup \alpha^\perp\right]^c\neq\emptyset.\]

Let $v\in\left[(\cup_{i=1}^\infty H_i)\cup \alpha^\perp\right]^c$ and define $\hat{y}_i$ as stated above. Note that $v$ has all non-zero coordinates and $\langle v, \alpha\rangle\not=0$. Moreover,
\[\sum_{i=1}^{\infty}\alpha_i\hat{y}_i=\sum_{i=1}^{\infty}(\alpha_i(v(i)e_1+Ly_i)=\langle \alpha, v\rangle e_1.\] 
Let any $j\geq 1$ and $\epsilon>0$. Since $\{y_i\}_{i=1}^\infty$ spans $\ell_2$, there is a finite subset $I_j$ and scalars $\{\beta_k\}_{k\in I_j}$ such that
\[\|e_j-\sum_{k\in I_j}\beta_ky_k\|<\epsilon.\]
This implies
\[\|e_{j+1}-\sum_{k\in I_j}\beta_k(\hat{y}_k-v(k)e_1)\|<\epsilon, \mbox{ for all }j\geq 1.\] Since $e_1\in \lspan\{\hat{y}_i\}_{i=1}^\infty$, $e_j\in \overline{\lspan}\{\hat{y}_i\}_{i=1}^\infty$ for all $j$, $\hat{Y}=\{\hat{y}_i\}_{i=1}^\infty$ spans $\ell_2$.

Now we will show that $\hat{X}\cup \hat{Y}$ satisfies Edidin's theorem. Since $\langle e_1, \hat{y}_i\rangle=v(i)\not=0$, the projection of $e_1$ on the vectors of $\hat{Y}$ span $\ell_2$. Let any non-zero vector $x\not= e_1$, the projection of $x$ onto the $\hat{x}_i$'s will spans $e_1^\perp\subset \ell_2$. Note that $x$ cannot be orthogonal to all $\hat{y}_i$ since these vectors span $\ell_2$. Let $\hat{y_j}$ be one such vector.  Since $\hat{y}_j$ is outside of $e_1^\perp\subset \ell_2$, the projection of $x$ onto the vectors of $\hat{X}\cup \hat{Y}$ span $\ell_2$ as well. Hence $\hat{X}\cup\hat{Y}$ does phase retrieval.
\end{proof}

\section{Finite Dimensional Results which are not Known in
Infinite Dimensions}

It is known \cite{CCJ} that there are two orthonormal bases for
$\RR^n$ which do phase retrieval.  We do not know the same
for $\ell_2$.

\begin{problem}
\item Are there two Riesz bases for $\ell_2$ which do phase retrieval?
\end{problem}

However, we can do phase retrieval with three Riesz sequences
for $\ell_2$.

\begin{proposition}
There are three Riesz sequences for $\ell_2$ which do phase
retrieval.
\end{proposition}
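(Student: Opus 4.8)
The plan is to build three Riesz sequences whose union has the complement property, and then invoke the theorem that a family in real $\ell_2$ does phase retrieval if and only if it has the complement property. Since we are allowed Riesz \emph{sequences} rather than Riesz \emph{bases}, we have room to maneuver: two of the three families can span $\ell_2$ individually (a Riesz basis is a Riesz sequence), and the third only needs to patch up the cases where one of the first two, after deleting a subset, fails to span.

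Here is the construction I would try. Split $\NN = \bigsqcup_{k=1}^\infty B_k$ into consecutive blocks $B_k$ with $|B_k| = 2k-1$. On each block $B_k$ take a full spark family $\{f^{(k)}_j\}_{j\in B_k}$ of $2k-1$ vectors living in $H_k := \lspan\{e_i : i\in B_k\} \cong \RR^k$; this family does phase retrieval in $\RR^k$, and deleting any single vector still leaves a full spark (hence complement-property, hence still phase-retrieving on $H_k$) family of $2k-2$ vectors — actually it leaves $2k-2 \ge 2k-1$ only when... so more carefully: I want each block family to retain the complement property even after an \emph{arbitrary} subset is removed from it, which a single full spark family cannot do. So instead, on block $B_k$ I will place \emph{three} full spark families $\{f^{(k,1)}_j\}_{j=1}^{2k-1}$, $\{f^{(k,2)}_j\}_{j=1}^{2k-1}$, $\{f^{(k,3)}_j\}_{j=1}^{2k-1}$ in $\RR^k$, each doing phase retrieval; grouping the first family across all blocks into $X_1$, the second into $X_2$, the third into $X_3$. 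Each $X_\ell = \bigcup_k \{f^{(k,\ell)}_j\}_j$ is an orthogonal direct sum (over $k$) of finite families, and if the vectors in each block are chosen with norms bounded above and below uniformly and the block families are uniformly full spark (uniform spark lower bound), then $X_\ell$ is a Riesz sequence in $\ell_2$ — this is the routine estimate I would defer, essentially block-diagonality plus uniform two-sided bounds on each block's analysis operator.

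For the complement property of $X_1 \cup X_2 \cup X_3$: given $S \subset X_1\cup X_2\cup X_3$, look block by block. Within block $B_k$ we have $3(2k-1)$ vectors in $\RR^k$ partitioned by $S$ and $S^c$; since $3(2k-1) > 2(2k-1)$, at least one of $S$, $S^c$ contains at least... no — I need that for \emph{every} block, one side spans $\RR^k$, and moreover the \emph{same} side ($S$ or $S^c$) works for all blocks simultaneously. The clean way: a vector $x\in\ell_2$ orthogonal to $\{x_i\}_{i\in S}$ decomposes as $x = \sum_k x^{(k)}$ with $x^{(k)}\in H_k$; $x$ is orthogonal to the span of $\{x_i\}_{i\in S}$ iff each $x^{(k)}$ is orthogonal to $\{x_i\}_{i\in S\cap B_k}$ inside $H_k$. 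So $\overline{\lspan}\{x_i\}_{i\in S} = \ell_2$ iff within \emph{every} block the $S$-part spans $H_k$. Thus failure of complement property means there is a block $k$ where $S\cap B_k$ does not span $H_k$ \emph{and} a block $k'$ where $S^c\cap B_{k'}$ does not span $H_{k'}$. But within a single block $B_k$, the three interleaved full spark families have the property that for any partition into two pieces, one piece spans $\RR^k$: indeed if neither piece spanned, each piece would miss being phase-retrieving, but any $2k-1$ of the $3(2k-1)$ block vectors already... here is the actual fact I need — if we remove any subset $T$ from a union of \emph{two} full spark families of size $\ge 2k-1$ each in $\RR^k$, then the remainder or $T$ spans, because two full spark families form an "overcomplete complement property" set (a union of two full spark $(2k-1)$-families has $2(2k-1)$ vectors, and any subset of size $<2k-1$... ) — I would verify the self-contained claim: a union of two full spark families each of size $n$ in $\RR^n$ has the complement property, since any subset $T$ with $|T|<n$ has a complementary set of size $>n$ meeting at least one of the two families in $\ge$... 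Actually simplest: a union of two full spark $n$-element families in $\RR^n$ has $2n$ vectors and is easily seen to have complement property because any $I$ with $|I|\ge n$ either contains $n$ vectors from one family (spanning) or else... this needs the third family to be safe, which is exactly why I built three per block. So within a block the three interleaved full spark families guarantee: for ANY partition, one side has $\ge 2k-1$ vectors... no, $\lceil 3(2k-1)/2\rceil$, but those might straddle families. The \textbf{main obstacle}, then, is precisely this block-level combinatorial claim — showing three interleaved full spark $(2k-1)$-families in $\RR^k$ have the property that every two-set partition has a spanning side — and then promoting "some block fails on the $S$ side, some block fails on the $S^c$ side" to a contradiction; if the block claim holds, no block can fail on either side, so the only remaining issue is consistency across blocks, which the block claim's strength (every partition, every side-or-other spans) removes entirely. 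I would resolve the block claim by choosing the $3(2k-1)$ block vectors to be full spark \emph{as a combined family}, i.e. any $k$ of them span $\RR^k$; then any partition side of size $\ge k$ spans, and since $3(2k-1)\ge 2k$, not both sides can have size $<k$, giving the result. That single observation collapses everything: take $X_1, X_2, X_3$ to be, on each block, any partition into three pieces of a combined full spark family of $3(2k-1)$ vectors in $\RR^k$ with uniformly bounded norms and uniform spark constant, and the theorem follows.
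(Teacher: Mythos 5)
There is a fatal gap, and it lies precisely at the point you wave through (``consistency across blocks, which the block claim's strength \dots removes entirely''). Your three sequences are supported in pairwise orthogonal blocks $H_k=\lspan\{e_i: i\in B_k\}$, and no family of vectors supported in an orthogonal direct sum of two or more nontrivial blocks can have the complement property, no matter how the vectors inside each block are chosen. Take $S$ to be all of your vectors supported in $H_1$ and $S^c$ the rest: then $\overline{\lspan}\{x_i\}_{i\in S}\subseteq H_1\neq \ell_2$ and $\overline{\lspan}\{x_i\}_{i\in S^c}\subseteq H_1^{\perp}\neq \ell_2$, so neither side spans. Equivalently, for $0\neq u\in H_1$ and $0\neq w\in H_1^{\perp}$, the vectors $x=u+w$ and $y=u-w$ satisfy $|\langle x,x_i\rangle|=|\langle y,x_i\rangle|$ for every block-supported $x_i$, yet $x\neq\pm y$, so phase retrieval fails outright. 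Your own reduction shows that failure of the complement property only needs the $S$-side to miss some block $k$ and the $S^c$-side to miss some \emph{other} block $k'\neq k$; the block-level claim (inside one block, every partition has a spanning side) only excludes $k=k'$, so it cannot close the argument --- and the example above shows the cross-block failure always occurs. There are also smaller slips (with $|B_k|=2k-1$ the block space is $(2k-1)$-dimensional, not $\RR^k$; the uniform Riesz bounds for the full spark blocks are asserted, not arranged), but the orthogonal-block structure is the real obstruction.

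The paper's proof avoids exactly this trap by making the finite-dimensional spaces nested rather than orthogonal: with $H_n=\lspan\{e_i\}_{i=1}^{3^{n+1}}$ it chooses, for $j=1,2,3$ and $3^n< i\le 3^{n+1}$, vectors $u_{nij}$ that are full spark in \emph{all} of $H_n$ (so they may have components in the earlier coordinates --- this is what defeats the block-partition counterexample) while satisfying $\sum_{i}\|u_{nij}-e_i\|^2\le 2^{-(n+1)}$. The whole collection does phase retrieval by the paper's theorem on increasing subspaces $W_1\subset W_2\subset\cdots$ with dense union (each stage has $2\cdot 3^{n+1}\ge 2\dim H_n-1$ full spark vectors, hence does phase retrieval in $H_n$), and for each fixed $j$ the subfamily is a perturbation of an orthonormal sequence with total defect at most $\tfrac12$, hence a Riesz sequence by the argument of Proposition \ref{pp1}. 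If you want to salvage your construction, replace the disjoint blocks by such an increasing chain and demand full spark in the whole space up to the current stage, keeping your perturbation bound so that the three subfamilies remain Riesz sequences.
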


\begin{proof}
For every $n$ let $H_n=\lspan \{e_i\}_{i=1}^{3^{n+1}}$. Choose $\{u_{nij}\}_{i=3^{n}+1,j=1}^{3^{n+1}\;\;\;\;,3}$ so that they are full spark in $H_n$ and 
\[\sum_{i=3^n+1}^{3^{n+1}}\|u_{nij}-e_i\|^2\leq \frac{1}{2^{n+1}}\;\;\;\; \text{for}\;\;\; j=1,2,3\]
Then the collection $\{u_{nij}: 3^{n}+1\leq i\leq 3^{n+1}, 1\leq j\leq 3, 1\leq n<\infty\}$ does phase retrieval. Moreover, 
\[\sum_{n=1}^\infty \sum_{i=3^n+1}^{3^{n+1}}\|u_{nij}-e_i\|^2\leq \frac{1}{2}\;\;\;\; \text{for}\;\;\; j=1,2,3,\]
therefore $\{u_{nij}\}_{n=1, i=3^{n+1}}^{\infty, \;\;\; 3^{n+1}}$ is a Riesz sequence for $j=1,2,3.$
\end{proof}

\section{Sets Which do Phase Retrieval in $\ell_2$}

\begin{theorem}
Assume we have subspaces $W_1 \subset W_2 \subset \cdots\subset \ell_2$
and vectors $\{x_{ij}\}_{j\in I_i}$ doing phase
retrieval in $W_i$ for every i.  Finally, assume $\cup_{i=1}^{\infty}W_i$ is dense in $\ell_2.$
Then $\{x_{ij}\}_{i=1,j\in I_i}^{\infty}$ does phase retrieval
in $\ell_2$.
\end{theorem}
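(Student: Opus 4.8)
The plan is to use the characterization of phase retrieval via the complement property (the first theorem cited from \cite{CCD}): a family does phase retrieval in $\ell_2$ if and only if it has the complement property. So it suffices to show that $X = \{x_{ij}\}_{i=1,\,j\in I_i}^{\infty}$ has the complement property. Concretely, I would take an arbitrary partition of the index set $\bigcup_i \{i\} \times I_i$ into $S$ and $S^c$, and aim to show that either $\overline{\lspan}\{x_{ij} : (i,j)\in S\} = \ell_2$ or $\overline{\lspan}\{x_{ij} : (i,j)\in S^c\} = \ell_2$.

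The key idea is to work one layer $W_i$ at a time. For each fixed $i$, the restriction of $S$ to layer $i$ partitions $I_i$; since $\{x_{ij}\}_{j\in I_i}$ does phase retrieval in $W_i$, it has the complement property relative to $W_i$, so for each $i$ either $\overline{\lspan}\{x_{ij}: (i,j)\in S\} = W_i$ or $\overline{\lspan}\{x_{ij}: (i,j)\in S^c\} = W_i$. The danger is that for some $i$ the spanning half is $S$ and for others it is $S^c$, so one cannot immediately conclude. Here I would use the nesting $W_1 \subset W_2 \subset \cdots$: if infinitely many indices $i$ have the property that the ``$S$-side'' spans $W_i$, then since these $W_i$ are nested and increasing, $\overline{\lspan}\{x_{ij}: (i,j)\in S\}$ contains $W_i$ for arbitrarily large $i$, hence contains $\overline{\bigcup_i W_i} = \ell_2$ by density, so the $S$-side spans $\ell_2$. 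Otherwise, only finitely many $i$ have the $S$-side spanning $W_i$, so all but finitely many $i$ have the $S^c$-side spanning $W_i$; again using nesting and density this forces the $S^c$-side to span $\ell_2$. (In the borderline cases where \emph{both} sides span $W_i$, we may assign that $i$ to whichever side we like; it does not affect the argument.)

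The one subtlety I would be careful about: to run the ``contains $W_i$ for arbitrarily large $i$ hence contains every $W_i$'' step, I need that $\overline{\lspan}$ of the $S$-side restricted to the layers up to $i$ contains $W_i$, and since $W_i \supseteq W_k$ for $k \le i$ this is immediate — the later layers only add vectors, they never remove the spanning of $W_i$. So the relevant monotone quantity is: let $A = \{i : \overline{\lspan}\{x_{kj} : (k,j)\in S,\ k \le i\} = W_i\}$ and similarly $A'$ for $S^c$; by the layerwise complement property $A \cup A' = \NN$, so at least one of $A, A'$ is infinite, and whichever is infinite gives a subfamily whose closed span contains a cofinal chain of the $W_i$'s, hence all of $\overline{\bigcup_i W_i} = \ell_2$.

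I do not expect a serious obstacle here — the proof is essentially a diagonal/exhaustion argument gluing the finite-level complement properties together using density of $\bigcup W_i$. The main point requiring a clean write-up is the dichotomy on whether infinitely many layers have their $S$-side spanning, and verifying that ``spans $W_i$'' is inherited upward along the nested chain so that an infinite collection of such layers yields a dense union. Once that bookkeeping is set up, invoking the complement-property characterization of phase retrieval in $\ell_2$ finishes the proof.
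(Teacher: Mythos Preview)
Your proposal is correct and follows essentially the same route as the paper: verify the complement property by noting that any partition of the full index set induces, for each $i$, a partition of $I_i$; invoke phase retrieval (hence complement property) in $W_i$ to get one side spanning $W_i$; then use the pigeonhole observation that one of the two sides must span $W_i$ for infinitely many $i$, together with the nesting $W_1\subset W_2\subset\cdots$ and density of $\bigcup_i W_i$, to conclude that this side has closed span equal to $\ell_2$. The extra bookkeeping you introduce with the sets $A$ and $A'$ involving $k\le i$ is unnecessary---it suffices to note that if the $S$-side of layer $i$ alone spans $W_i$, then certainly the closed span of the full $S$-side contains $W_i$---but it does no harm.
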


\begin{proof}
We will check the complement property. Observe that a partition of vectors $\{x_{ij}\}_{i=1,j\in I_i}^{\infty}$ induces a partition for vectors $\{x_{ij}\}_{j\in I_i}\subset W_i$. By assumption $\{x_{ij}\}_{j\in I_i}$ does phase
retrieval on $W_i$, therefore for each $i=1,2,\ldots$ 
\[ \mbox{ either }
W_i\subset \overline{\lspan}\{x_{ij}\}_{(i,j)\in I} \mbox{ or }
W_i \subset \overline{\lspan}\{x_{ij}\}_{(i,j)\in I^c} .\]
Then either $I$ or $I^c$ contains infinitely many $W_i$, without loss of generality we assume it is $I$. This means that for
infinitely many $i$, 
\[ W_i \subset \overline{\lspan}\{x_{ij}\}_{(i,j)\in I}.\]
Since $W_i \subset W_{i+1}$ for all $i$, 
\[ \cup_{i=1}^{\infty}W_i \subset \overline{\lspan}\{x_{ij}\}_{(i,j)\in I},\]
and so the closure of the right hand set is $\ell_2$.  This shows our family of vectors have complement property and hence do
phase retrieval on $\ell_2$.
\end{proof}

\begin{theorem}
Let $P_n$ be the orthogonal projection of $\ell_2$ onto $E_n=\lspan\{e_i\}_{i=1}^n$.
There is a set of vectors $Y=\{y_{ni}\}_{n=1,i=1}^{\infty,\;\;\;\infty}$ that does not
do phase retrieval on $\ell_2$, but  $X=\{x_{ni}\}_{n=1,i=1}^{\infty,\;\;\;\infty}=\{P_n{y_{ni}}\}_{n=1,i=1 }^{\infty,\;\;\; \infty}$ does phase retrieval
in $\ell_2$. Moreover, finite subsets of $X$ do phase retrieval on $E_n$ for every $n$.
\end{theorem}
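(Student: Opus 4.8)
The plan is to build $Y$ and $X$ by a telescoping/staircase construction so that $X = \{P_n y_{ni}\}$ lives in the increasing union $\bigcup E_n$, applies the preceding theorem on nested subspaces, while $Y$ itself fails phase retrieval because some fixed nonzero vector is orthogonal to a half of $Y$ whose complement still does not span. First I would fix, for each $n$, a finite full spark family $F_n = \{z_{n1},\dots,z_{n,2n-1}\}$ in $E_n = \operatorname{span}\{e_i\}_{i=1}^n$; such families exist and do phase retrieval on $E_n$ by the complement property in finite dimensions. I would then define $y_{ni}$ to be $z_{ni}$ viewed inside $\ell_2$ but \emph{perturbed in the tail}: set $y_{ni} = z_{ni} + w_{ni}$ where $w_{ni}$ is supported on coordinates strictly beyond $n$ and chosen so that all the $y_{ni}$ together are, say, orthogonal to a fixed vector $u \in \ell_2$ with $u \notin E_N$ for any $N$ (for instance $u = \sum 2^{-k} e_k$), \emph{and} so that $\{y_{ni}\}$, even when split along the partition witnessing the failure, leaves room for $u$ in the orthogonal complement of one side. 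Since $x_{ni} = P_n y_{ni} = P_n z_{ni} = z_{ni}$ (the perturbation is killed by $P_n$), $X$ restricted to the $n$-th block is exactly $F_n$, so finite subsets of $X$ do phase retrieval on $E_n$, and the full collection $X$ satisfies the hypotheses of the nested-subspace theorem with $W_n = E_n$, hence does phase retrieval on $\ell_2$.

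The substantive step is arranging the failure of phase retrieval for $Y$. By Theorem (Edidin's characterization via complement property / the projection version is not available here since $Y$ is vectors, so I would use the complement-property characterization directly): I must exhibit a partition $I \sqcup I^c$ of the index set of $Y$ such that neither $\overline{\operatorname{span}}\{y_{ni}\}_{(n,i)\in I}$ nor $\overline{\operatorname{span}}\{y_{ni}\}_{(n,i)\in I^c}$ is all of $\ell_2$. The clean way is to make \emph{every} $y_{ni}$ orthogonal to the single fixed vector $u$: then no subset of $Y$ spans $\ell_2$ at all, so \emph{no} partition works and $Y$ trivially fails the complement property, hence fails phase retrieval. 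So the real content reduces to: choose tail corrections $w_{ni}$, each with $\|w_{ni}\|$ as small as we like and supported beyond coordinate $n$, so that $\langle z_{ni} + w_{ni}, u\rangle = 0$; this is a one-dimensional linear condition on $w_{ni}$ and is solvable with $w_{ni}$ supported on a single coordinate $e_{m}$ for any $m > n$ with $u(m) \neq 0$, namely $w_{ni} = -\langle z_{ni},u\rangle u(m)^{-1} e_m$. One must only check that this does not accidentally destroy the spanning of $X$ — but it cannot, because $P_n$ annihilates $w_{ni}$ entirely, so $X$ is literally $\bigcup_n F_n$.

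The remaining verifications are routine: (i) $X$ is indeed a (phase-retrieving) set in $\ell_2$ — this is immediate from the nested-subspace theorem with $\bigcup E_n$ dense; (ii) finite subsets of $X$ do phase retrieval on $E_n$ — take the $n$-th block $F_n$, which is full spark on $E_n$ and has $2n-1$ vectors; (iii) $Y$ fails phase retrieval — because every $y_{ni} \perp u \neq 0$, so $\overline{\operatorname{span}}\, Y \subseteq u^\perp \subsetneq \ell_2$, and a set that does not even span cannot have the complement property. I expect the main obstacle, such as it is, to be purely bookkeeping: making sure the perturbation coordinate $m$ chosen for $y_{ni}$ is beyond $n$ (so $P_n w_{ni} = 0$) while $u(m)\neq 0$, which is automatic for $u = \sum 2^{-k}e_k$. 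There is genuinely no hard analytic point here; the theorem is a packaging of the nested-subspace result against the trivial observation that a non-spanning family fails phase retrieval, and the construction above exhibits both phenomena simultaneously in one family.
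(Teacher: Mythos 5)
Your proposal is correct and follows essentially the same route as the paper: take full spark blocks in each $E_n$ as $X$ (so phase retrieval in $\ell_2$ follows from the nested-subspace theorem and each block handles $E_n$), then produce $Y$ by adding to each $x_{ni}$ a single-coordinate tail correction beyond position $n$ forcing all $y_{ni}$ into a fixed hyperplane $u^\perp$, so $Y$ cannot span and hence fails phase retrieval while $P_n y_{ni}=x_{ni}$. The paper's proof is the same construction with a general vector $w$ having infinitely many nonzero coordinates in place of your $u=\sum 2^{-k}e_k$.
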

\begin{proof}
For each $n\in\NN$, let $X_n$ be a finite set of vectors $\{x_{ni}\}_{i\in I_n}$ contained in $E_n$ that does phase retrieval in $E_n$. For example consider a full spark set in $E_n$ embedded in $\ell_2$ by adding zero to all other entries. We know that $X=\{x_{ni}\}_{n=1, i\in I_n}^\infty$ does phase retrieval in $\ell_2$. It is sufficient to show that for each $n$ and $i$, there exists $y_{ni}$, with $P_ny_{ni}=x_{ni}$, such that the $y_{ni}$ is contained in a fixed hyperplane for all $n, i$. Let $w$ be the vector with infinitely many non-zero coordinates. For each $n$, $x_{ni}$ has finite support contained in the first $n$ coordinates, for all $i\in I_n$. Then there is $j>n$ such that $w(j)\not=0$. Define $y_{ni}=x_{ni}-\dfrac{\langle x_{ni},w\rangle}{w(j)}e_j$, for $i\in I_n$. It follows that $\langle y_{ni}, w\rangle=0$, and hence $y_{ni}\subset w^\perp$ for all $n, i$. This completes the proof. 
\end{proof}

In the following, we will show how to create a new phase retrieval set by translating the vectors of the original one in the same direction. First, we will need a lemma.

\begin{lemma}\label{Lem2}
If $\{x_i\}_{i=1}^{\infty}$ is Bessel in $\ell_2$, then for
every $v\in \ell_2$, 
\[ \lim_{i\rightarrow \infty}\langle v,x_i\rangle =0.\]
\end{lemma}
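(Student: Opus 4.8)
The plan is to use the fact that a Bessel sequence has a bounded analysis operator, so the squared coefficients are summable, forcing the terms to go to zero. Concretely, let $B$ be a Bessel bound for $\{x_i\}_{i=1}^{\infty}$, so that $\sum_{i=1}^{\infty}|\langle v,x_i\rangle|^2 \le B\|v\|^2 < \infty$ for every fixed $v\in\ell_2$. Since this is a convergent series of nonnegative terms, its general term must tend to $0$; that is, $|\langle v,x_i\rangle|^2\to 0$, hence $\langle v,x_i\rangle\to 0$ as $i\to\infty$.

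That really is the whole argument, so the writeup is short. The one point worth stating carefully is that ``Bessel'' by definition gives only the upper frame inequality $\sum_i|\langle v,x_i\rangle|^2\le B\|v\|^2$ (no lower bound is needed), and that this holds simultaneously for the fixed $v$ we are given, which is all we require. There is essentially no obstacle here; if anything, the only thing to be careful about is not accidentally invoking a lower frame bound, since a general Bessel sequence need not be a frame.

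\begin{proof}
Since $\{x_i\}_{i=1}^\infty$ is Bessel, there is a constant $B<\infty$ with
\[ \sum_{i=1}^{\infty}|\langle v,x_i\rangle|^2 \le B\|v\|^2 < \infty \]
for every $v\in\ell_2$. As this is a convergent series of nonnegative real numbers, its general term tends to zero, so $|\langle v,x_i\rangle|^2\to 0$ and therefore $\langle v,x_i\rangle\to 0$ as $i\to\infty$.
\end{proof}
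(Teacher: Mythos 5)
Your proof is correct and is essentially identical to the paper's: both use the Bessel bound to get $\sum_i|\langle v,x_i\rangle|^2<\infty$ and then conclude the general term of a convergent series tends to zero. Your remark that only the upper frame inequality is needed is accurate and matches the paper's (even more terse) argument.
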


\begin{proof}
Given a vector $v$, we have
\[ \sum_{i=1}^{\infty}|\langle v,x_i\rangle|^2< \infty,\]
hence $\lim_{i\rightarrow \infty}|\langle v,x_i\rangle|=0.$
\end{proof}

\begin{remark}
Note that if any $\{x_i\}_{i=1}^{\infty}$ does phase retrieval,
then
\[ \left \{ \frac{1}{\|x_i\|2^i}x_i\right \}_{i=1}^{\infty}\]
is Bessel and also does phase retrieval.
\end{remark}

\begin{theorem}\label{PRTranslateThm}
Assume $\{x_i\}_{i=1}^{\infty}$ is a Bessel sequence in $\ell_2$ and
does phase retrieval.  Then for every $v\in \ell_2$, 
$\{x_i+v\}_{i=1}^{\infty}$ does phase retrieval.
\end{theorem}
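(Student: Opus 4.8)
The plan is to verify the complement property for $\{x_i+v\}_{i=1}^\infty$ directly. Fix a partition $\NN = I \cup I^c$. Since $\{x_i\}_{i=1}^\infty$ does phase retrieval, it has the complement property, so without loss of generality $\overline{\lspan}\{x_i\}_{i\in I} = \ell_2$; I must show $\overline{\lspan}\{x_i+v\}_{i\in I} = \ell_2$. If $I$ is finite this is automatic since $\overline{\lspan}\{x_i\}_{i\in I}$ would be finite-dimensional, contradicting that it equals $\ell_2$; so $I$ is infinite, which is where the Bessel hypothesis comes into play.

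The key idea is to take a vector $w \perp \overline{\lspan}\{x_i+v\}_{i\in I}$ and show $w=0$. For every $i\in I$ we then have $\langle w, x_i \rangle = -\langle w, v\rangle =: c$, a single constant independent of $i$. Now invoke Lemma~\ref{Lem2}: since $\{x_i\}$ is Bessel and $I$ is infinite, $\langle w, x_i\rangle \to 0$ along $i\in I$, forcing $c = 0$. Hence $\langle w, x_i\rangle = 0$ for all $i\in I$, i.e.\ $w \perp \overline{\lspan}\{x_i\}_{i\in I} = \ell_2$, so $w=0$. This shows $\overline{\lspan}\{x_i+v\}_{i\in I} = \ell_2$, and since the partition was arbitrary, $\{x_i+v\}_{i=1}^\infty$ has the complement property and therefore does phase retrieval by the characterization theorem (Theorem of \cite{CCD}).

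The main subtlety to handle carefully is the case analysis on whether $I$ (or $I^c$) is finite: if $\overline{\lspan}\{x_i\}_{i\in I}=\ell_2$ then $I$ must be infinite (as $\ell_2$ is infinite-dimensional and $x_i$'s are vectors in it), so Lemma~\ref{Lem2} applies along $I$. There is no genuine obstacle beyond bookkeeping — the translation by $v$ changes each inner product $\langle w, x_i\rangle$ only by the fixed additive constant $\langle w, v\rangle$, and the Bessel condition kills that constant. One small point worth stating explicitly: the translated family $\{x_i+v\}$ need not be Bessel, but that is irrelevant, since the complement property (not any frame bound) is all that phase retrieval requires in $\ell_2$.
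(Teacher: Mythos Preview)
Your proof is correct and rests on the same core idea as the paper's: translation by $v$ shifts each inner product $\langle w,x_i\rangle$ by the fixed constant $\langle w,v\rangle$, and Lemma~\ref{Lem2} (Bessel $\Rightarrow$ inner products tend to zero) forces that constant to vanish. The only cosmetic difference is the framing: the paper works directly with the definition of phase retrieval (starting from $|\langle x,x_i+v\rangle|=|\langle y,x_i+v\rangle|$ and splitting on the sign to produce the index set $I$), whereas you verify the complement property for $\{x_i+v\}$ by transferring it from $\{x_i\}$; the analytic content is identical.
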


\begin{proof}
Assume 
\[ |\langle x,x_i+v\rangle|=|\langle y, x_i+v\rangle|,
\mbox{ for all } i=1,2,\ldots.\]
Let
\[ I=\{i:\langle x,x_i+v\rangle=\langle y, x_i+v\rangle\}.\]
Then either $|I|$ or $|I^c|$ is infinite.  By the complement property,
either $\{x_i\}_{i\in I}$ or $\{x_i\}_{i\in I^c}$ spans the space.
Without loss of generality,
assume $\{x_i\}_{i\in I}$ spans $\ell_2$.  Now,
\[ \langle x,x_i+v\rangle=\langle y, x_i+v\rangle,\mbox{ for all }
i\in I,\]
and so
\[ \langle x-y,x_i\rangle = \langle y-x,v\rangle,\mbox{ for all }
i\in I.\]
By Lemma \ref{Lem2}, 
\[ \langle y-x,v\rangle = 0 = \langle x-y,x_i\rangle, \mbox{ for all } i\in I.\]
It follows that $x-y=0$.
\end{proof}

\begin{remark}
Note that $\{x_i+v\}_{i=1}^{\infty}$ is not Bessel.  But we can
scale it to be Bessel and it still does phase retrieval.
\end{remark}

\begin{corollary}
We can perturb a family doing phase retrieval in $\ell_2$ as long
as we perturb all vectors in the {\it same direction}.
\end{corollary}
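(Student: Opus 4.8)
The plan is to read this corollary as an immediate consequence of Theorem \ref{PRTranslateThm} once the Bessel hypothesis has been arranged. Given an arbitrary family $\{x_i\}_{i=1}^{\infty}$ doing phase retrieval in $\ell_2$ — not assumed Bessel — I would first replace it by $\{\tilde{x}_i\}_{i=1}^{\infty}$, where $\tilde{x}_i = \frac{1}{\|x_i\|2^i}x_i$. By the remark preceding Theorem \ref{PRTranslateThm}, this rescaled family is Bessel, and it still does phase retrieval because rescaling by nonzero scalars preserves phase retrieval (Theorem \ref{pr1}).

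Next, I would invoke Theorem \ref{PRTranslateThm} directly: for any $v \in \ell_2$, the translated family $\{\tilde{x}_i + v\}_{i=1}^{\infty}$ does phase retrieval. Taking $\|v\|$ small, this is exactly a perturbation of $\{\tilde{x}_i\}$ in which every vector is displaced by the \emph{same} vector $v$, and the conclusion is in fact stronger: phase retrieval survives no matter how large $v$ is. If one wants the perturbed family to remain Bessel, the remark following Theorem \ref{PRTranslateThm} observes that $\{\tilde{x}_i + v\}$ need not be Bessel, but one can rescale it once more (for instance dividing each $\tilde{x}_i + v$ by its norm times $2^i$) without destroying phase retrieval, again by Theorem \ref{pr1}.

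The only point requiring a little care — and essentially the only obstacle, minor as it is — is bookkeeping about which family is being perturbed: rescaling the original $x_i$ by distinct scalars $c_i$ and then translating by $v$ produces $c_i x_i + v$, which corresponds, after dividing by $c_i$, to the \emph{nonuniform} perturbation $x_i + v/c_i$ of the original family. So the clean formulation is that a Bessel representative of a phase-retrieval family can be translated uniformly by any $v \in \ell_2$ and still does phase retrieval, and the corollary is to be understood in this normalized sense. No estimates beyond those already in Theorem \ref{PRTranslateThm} and the scaling invariance of Theorem \ref{pr1} are needed.
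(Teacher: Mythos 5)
Your proposal is correct and matches the paper's intent: the corollary is stated as an immediate informal consequence of Theorem \ref{PRTranslateThm} (together with the preceding remark that any phase-retrieval family can be rescaled to a Bessel one still doing phase retrieval, via Theorem \ref{pr1}), and the paper supplies no separate argument beyond this. Your extra bookkeeping remark about the rescaled versus original family is a fair clarification but does not change the substance.
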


\begin{corollary}
Given $\{x_i\}_{i=1}^{\infty}$ which is Bessel and does phase retrieval in $\ell_2$, for any $x_j$ the family $\{x_i-x_j\}_{i=1}^\infty$ does phase retrieval.
\end{corollary}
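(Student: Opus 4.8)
The plan is to recognize this statement as a direct specialization of Theorem~\ref{PRTranslateThm}. For a fixed index $j$, the vector $v := -x_j$ lies in $\ell_2$, and the translated family $\{x_i + v\}_{i=1}^{\infty}$ is literally $\{x_i - x_j\}_{i=1}^{\infty}$. Since $\{x_i\}_{i=1}^{\infty}$ is assumed to be Bessel and to do phase retrieval, Theorem~\ref{PRTranslateThm} applies verbatim with this choice of $v$ and yields that $\{x_i - x_j\}_{i=1}^{\infty}$ does phase retrieval. So the whole proof is that one substitution.

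The only cosmetic point worth a sentence is that the summand with $i=j$ equals the zero vector; this is harmless, because a zero vector imposes no constraint in the phase-retrieval condition, and equivalently does not affect whether a subfamily spans, so the complement-property argument underlying Theorem~\ref{PRTranslateThm} is untouched. If a self-contained argument is preferred, one can simply rerun that proof: given $|\langle x, x_i - x_j\rangle| = |\langle y, x_i - x_j\rangle|$ for all $i$, split $\NN$ into the set $I$ on which these inner products agree and its complement $I^c$; the complement property of $\{x_i\}_{i=1}^{\infty}$ forces one of $\{x_i\}_{i\in I}$, $\{x_i\}_{i\in I^c}$ to span $\ell_2$; on that spanning index set the quantity $\langle x \pm y, x_i\rangle$ is constant, equal to $\langle x \pm y, x_j\rangle$, and Lemma~\ref{Lem2} (using that $\{x_i\}_{i=1}^{\infty}$ is Bessel) forces that constant to be $0$, whence $x = \pm y$.

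There is essentially no obstacle here. The only subtlety to keep in mind is the one flagged in the remark after Theorem~\ref{PRTranslateThm}: $\{x_i - x_j\}_{i=1}^{\infty}$ is in general no longer Bessel. But Bessel-ness is not part of the conclusion, and it is only the complement property (via Lemma~\ref{Lem2}) that is actually invoked in the proof, so this causes no difficulty.
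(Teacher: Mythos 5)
Your proposal is correct and coincides with the paper's own proof: the paper also simply sets $v=-x_j$ and applies Theorem~\ref{PRTranslateThm}. The extra observations about the zero vector and the loss of Besselness are fine but not needed.
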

\begin{proof}  Let $v=-x_{j}$ and note that it follows that $\{x_i+v\}_{i=1}^\infty$ does phase retrieval by Theorem \ref{PRTranslateThm}.
\end{proof}

Note repeating the argument in the previous corollary, it is possible to ``delete" a finite number of vectors by translating the system and scaling the set so they are Bessel.

\begin{proposition}
There is a family of vectors in $\ell_2$
doing phase retrieval where each of the vectors has all non-zero
coordinates with respect to the unit vectors.
\end{proposition}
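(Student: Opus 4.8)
The plan is to start from any family $\{x_i\}_{i=1}^\infty$ doing phase retrieval in $\ell_2$ and perturb it, vector by vector, in a way that clears out all zero coordinates without destroying phase retrieval. The natural engine for this is the translation result, Theorem~\ref{PRTranslateThm}: after scaling the original family to be Bessel (possible by the Remark preceding that theorem), translating every vector by a single fixed $v\in\ell_2$ preserves phase retrieval. So first I would pick a concrete starting family — for instance a scaled version of the full-spark family from the Full Spark section, or even the family $\{e_i+e_j\}_{i<j}$ scaled to be Bessel — and then look for one vector $v$ with all non-zero coordinates such that $x_i+v$ has all non-zero coordinates for every $i$ simultaneously.

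The key step is the simultaneous avoidance: for each $i$ and each coordinate $k$, the ``bad'' set of $v$'s is the hyperplane $\{v : v(k)=-x_i(k)\}$ in $\ell_2$, which is nowhere dense. There are countably many pairs $(i,k)$, so by the Baire Category Theorem the union of all these hyperplanes, together with the countably many coordinate hyperplanes $\{v:v(k)=0\}$ (to ensure $v$ itself has non-zero coordinates, which is needed so that $x_i+v$ is non-zero in coordinates where $x_i$ vanishes), does not cover $\ell_2$. Pick $v$ in the complement. Then $x_i+v$ has all non-zero coordinates for every $i$: in coordinates where $x_i(k)\ne 0$ we have avoided $v(k)=-x_i(k)$, and in coordinates where $x_i(k)=0$ we have $x_i(k)+v(k)=v(k)\ne 0$. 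By Theorem~\ref{PRTranslateThm} the translated family $\{x_i+v\}_{i=1}^\infty$ still does phase retrieval, and after rescaling (which does not affect phase retrieval, by Theorem~\ref{pr1}) we are done.

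I expect the only real subtlety to be bookkeeping: one must make sure the starting family can genuinely be taken Bessel (handled by the scaling remark) and that the Baire argument is applied in $\ell_2$, a complete metric space, to a genuinely countable collection of closed nowhere-dense sets — each hyperplane $\{v:v(k)=c\}$ is closed and has empty interior. Everything else is routine, so the main obstacle is essentially just assembling these pieces cleanly rather than any genuine difficulty.
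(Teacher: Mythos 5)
Your proposal is correct and follows essentially the same route as the paper: scale the starting family to be Bessel, translate every vector by one fixed $v\in\ell_2$ using Theorem~\ref{PRTranslateThm}, and choose $v$ to avoid the countably many bad values so that every $x_i+v$ has all non-zero coordinates. The only cosmetic difference is that the paper selects $v$ coordinate-wise (picking $a_j\in(0,2^{-j})$ with $a_j\neq -x_i(j)$ for all $i$, which gives $v\in\ell_2$ directly) instead of invoking the Baire Category Theorem.
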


\begin{proof}
Let $\{x_i\}_{i=1}^{\infty}$ do phase retrieval.  Let $\{e_i\}_{i=1}^{\infty}$ be the unit vectors.  For any $j=1,2,\ldots$ the family
$\{x_i(j)\}_{i=1}^{\infty}$ is a countable set of real numbers
so choose a real number $a_j\not=- w_i(j)$ and $0<a_j< \frac{1}{2^j}$ for all $i=1,2,\ldots$. 
Let $v=(a_1,a_2,\ldots)$.  Then $\{x_i+v\}_{i=1}^{\infty}$ does
phase retrieval and each vector has all non-zero coordinates.
\end{proof}

\end{document}